\documentclass[12pt]{amsart}
\usepackage{a4wide}
\usepackage{amsmath}
\usepackage[utf8]{inputenc}
\usepackage{amssymb}
\usepackage{amsopn}
\usepackage{epsfig}
\usepackage{amsfonts}
\usepackage{latexsym}
\usepackage{graphicx}
\usepackage{enumitem}
\usepackage{color}





\newtheorem{theorem}{Theorem}[section]
\newtheorem{lemma}[theorem]{Lemma}

\newtheorem{corollary}[theorem]{Corollary}

\theoremstyle{definition}

\theoremstyle{remark}

\numberwithin{equation}{section}



\newcommand{\B}{\ensuremath{\mathcal{B}}}
\renewcommand{\c}{ {\mathbf{c}}}
\renewcommand{\t}{ {\mathbf{t}}}

\newcommand{\U}{{  {\mathcal{U}}}}

\newcommand{\set}[1]{\left\{#1\right\}}
\newcommand{\la}{\lambda}

\newcommand{\f}{\infty}
\renewcommand{\d}{\mathbf{d}}
\newcommand{\om}{\omega}
\newcommand{\al}{\alpha}

\newcommand{\si}{\sigma}
\renewcommand{\th}{\theta}
\newcommand{\ra}{\rightarrow}
\newcommand{\de}{\delta}


\begin{document}

\title{univoque bases and Hausdorff dimensions}
\author{Derong Kong}
\address{School of Mathematical Science, Yangzhou University, Yangzhou, JiangSu 225002, People's Republic of China} 

\curraddr{Mathematical Institute, University of Leiden, PO Box 9512, 2300 RA Leiden, The Netherlands} 
\email{derongkong@126.com}

\author{Wenxia Li}
\address{Department of Mathematics, Shanghai Key Laboratory of PMMP, East China Normal University, Shanghai 200062,
People's Republic of China}
\email{wxli@math.ecnu.edu.cn}

\author{Fan L\"{u} }
\address{Department of Mathematics, Sichuan Normal University, Chengdu 610068, People's Republic of China}
\email{lvfan1123@163.com}

\author{Martijn de Vries}
\address{Tussen de Grachten 213, 1381 DZ, Weesp, The Netherlands}
\email{martijndevries0@gmail.com}

\date{\today}
\dedicatory{}


\begin{abstract}
Given a positive integer $M$ and a real number $q >1$, a \emph{$q$-expansion} of a real number $x$ is a sequence $(c_i)=c_1c_2\cdots$ with $(c_i) \in \{0,\ldots,M\}^\infty$ such that
\[x=\sum_{i=1}^{\infty} c_iq^{-i}.\]
 It is well known that if $q \in (1,M+1]$, then each $x \in I_q:=\left[0,M/(q-1)\right]$ has a $q$-expansion. Let  $\mathcal{U}=\mathcal{U}(M)$ be the set of \emph{univoque bases} $q>1$ for which $1$ has a unique $q$-expansion. 
 The main object of this paper is to provide new characterizations of $\mathcal{U}$ and  to show that the Hausdorff dimension of the set of numbers $x \in I_q$ with a unique $q$-expansion changes the most if $q$ ``crosses'' a univoque base.

Denote by $\mathcal{B}_2=\B_2(M)$ the set  of  $q \in (1,M+1]$ such that   there exist numbers   having precisely two distinct $q$-expansions.
As a by-product of our results,  we obtain an answer to a question of Sidorov (2009) and prove that 
\[\dim_H(\mathcal{B}_2\cap(q',q'+\delta))>0\quad\textrm{for any}\quad \delta>0,\]
where $q'=q'(M)$ is the Komornik-Loreti constant. 
\end{abstract}

\keywords{univoque bases \and univoque sets\and   Hausdorff dimensions\and  generalized Thue-Morse sequences}

\subjclass{11A63\and  37B10\and 28A78}

\maketitle

\section{Introduction}\label{sec: Introduction}

Non-integer base expansions  have received much attention since the pioneering works of R\'{e}nyi \cite{Renyi_1957} and Parry \cite{Parry_1960}.
Given a positive integer $M$ and a real number $q \in (1,M+1]$, a sequence $(d_i)=d_1d_2\cdots$  with \emph{digits} $d_i\in\set{0,1,\ldots,M}$ is called a \emph{$q$-expansion of $x$}  or an \emph{expansion of $x$ in base $q$} if
\[
x=\pi_q((d_i)):=\sum_{i=1}^\f\frac{d_i}{q^i}.
\]

It is well known that each $x \in I_q:=[0,M/(q-1)]$ has a $q$-expansion. One such expansion - the \emph{greedy $q$-expansion} - can be obtained by performing the so called \emph{greedy algorithm} of R\'{e}nyi which is defined recursively as follows: if $d_1, \ldots,d_{n-1}$ is already defined (no condition if $n=1$), then $d_n$ is the largest element of $\set{0,\ldots,M}$ satisfying $\sum_{i=1}^n d_i q^{-i} \le x$. Equivalently, $(d_i)$ is the greedy $q$-expansion of $\sum_{i=1}^\f d_i q^{-i}$ if and only if $\sum_{i=n+1}^\f d_i q^{-i+n}<1$  whenever $d_n < M$, $n=1,2,\ldots$.
Hence if $1<q < r \le M+1$, then the greedy $q$-expansion of a number $x \in I_q$  is also the greedy expansion in base $r$ of a number in $I_r$.

Let $\U_q$ be the \emph{univoque set} consisting of numbers  $x\in I_q$ such that $x$ has a unique $q$-expansion, and let $\U_q'$ be the set of corresponding expansions.
Note that a sequence $(c_i)$ belongs to $\U_q'$ if and only if both the sequences $(c_i)$ and $(M-c_i):=(M-c_1)(M-c_2)\cdots$ are greedy $q$-expansions, hence $\U_q' \subseteq \U_r'$ whenever $1< q < r \le M+1$.
Many works are devoted to the univoque sets $\U_q$ (see, e.g., \cite{Erdos_Joo_Komornik_1990,Erdos_Joo_1992,Glendinning_Sidorov_2001}). Recently, de Vries and Komornik  investigated their topological properties in \cite{DeVries_Komornik_2008}.    Komornik  et al. considered their Hausdorff dimension in \cite{Komornik_Kong_Li_2015_1}, and showed  that the dimension function $D: q\mapsto\dim_H\U_q$ behaves like a Devil's staircase  on $(1,M+1]$. For more information on the univoque set $\U_q$ we refer to the survey paper \cite{Komornik_2011} and the references therein.

There is an intimate connection between the set $\U_q$ and the set of \emph{univoque bases} $\U=\U(M)$  consisting of numbers $q>1$ such that $1$ has a unique $q$-expansion over the alphabet $\set{0,1,\ldots,M}$. For instance, it was shown in \cite{DeVries_Komornik_2008} that $\U_q$ is closed if and only if $q$ does not belong to the set $\overline{\U}$. It is well-known  that $\U$ is a Lebesgue null set of full Hausdorff dimension (cf.~\cite{Darczy_Katai_1995,Erdos_Horvath_Joo_1991,Komornik_Kong_Li_2015_1}). Moreover,   the smallest  element of $\U$ is  the \emph{Komornik-Loreti constant} (cf.~\cite{Komornik_Loreti_1998,Komornik_Loreti_2002})
\[q'=q'(M),\] while the largest element of $\U$ is (of course) $M+1$.
Recently, Komornik and Loreti  showed in \cite{Komornik_Loreti_2007}  that its closure $\overline{\U}$ is a \emph{Cantor set} (see also, \cite{DeVries_Komornik_Loreti_2016}), i.e., a nonempty closed set having neither isolated  nor interior points.  Writing the open set  $(1,M+1] \setminus \overline{\U}=(1,M+1)\setminus \overline{\U}$ as the disjoint union of its connected components, i.e.,
\begin{equation}\label{eq:11}
(1,M+1]\setminus\overline{\U}= (1,q')\cup\bigcup(q_0,q_0^*),
\end{equation}
the left endpoints $q_0$ in (\ref{eq:11}) run over the whole set $\overline{\U}\setminus\U$, and the right endpoints $q_0^*$ run through a subset of $\U$ (cf.~\cite{DeVries_Komornik_2008}).
Furthermore,  each left endpoint $q_0$ is algebraic, while each right endpoint $q_0^*\in\U$   is transcendental (cf.~\cite{Kong_Li_2015}).

De Vries  showed in \cite{deVries_2009},  roughly speaking, that the sets $\U_q'$ change the most if we cross a univoque base. More precisely, it was shown that $q \in \U$ if and only if $\U_r' \setminus \U_q'$ is uncountable for each $r \in (q,M+1]$ and $r \in \overline\U$ if and only if $\U_r' \setminus \U_q'$ is uncountable for each $q \in (1,r)$.

The main object of this paper is to provide similar characterizations of $\U$ and $\overline{\U}$ in terms of the Hausdorff dimension of the sets $\U_r' \setminus \U_q'$ after a natural projection. Furthermore, we characterize the sets $\U$ and $\overline{\U}$ by looking at the Hausdorff dimensions of $\U$ and $\overline{\U}$ locally.

\begin{theorem}
\label{th:11}
Let  $q\in(1, M+1]$. The following statements are equivalent.
\begin{enumerate}
\item [{\rm(i)}]
$q\in\U$.

\item[{\rm(ii)}]  $\dim_H\pi_{M+1}(\U_r'\setminus\U_q')>0$  for any $r\in(q, M+1]$.

\item[{\rm(iii)}] $\dim_H\U\cap(q,r)>0$ for any $r\in(q, M+1]$.

\end{enumerate}
\end{theorem}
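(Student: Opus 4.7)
The plan splits into four implications. Two easy directions, (ii)$\Rightarrow$(i) and (iii)$\Rightarrow$(i), go through the contrapositive using results recalled in the introduction. If $q\notin\U$, de~Vries's theorem \cite{deVries_2009} furnishes some $r\in(q,M+1]$ with $\U_r'\setminus\U_q'$ at most countable, so $\pi_{M+1}(\U_r'\setminus\U_q')$ is countable and has Hausdorff dimension $0$, forcing the failure of (ii). For (iii), the decomposition (\ref{eq:11}) together with the identification of $\overline{\U}\setminus\U$ as the set of left endpoints $q_0$ shows that any $q\notin\U$ lies in the closure of some component of $(1,M+1]\setminus\overline{\U}$; choosing $r>q$ inside that same component gives $\U\cap(q,r)=\emptyset$ and hence $\dim_H\U\cap(q,r)=0$.

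For the substantive direction (i)$\Rightarrow$(ii), assume $q\in\U$ and $r\in(q,M+1]$. Since $\overline{\U}$ has no isolated points, univoque bases accumulate at $q$ from the right, so I can pick an auxiliary $s\in\U\cap(q,r)$. The plan is then to build a subshift $Y\subseteq\U_s'\setminus\U_q'\subseteq\U_r'\setminus\U_q'$ of positive topological entropy. Fix a finite word $w$ that is admissible in base $s$ together with its reflection $M-w$ but violates the lexicographic condition characterising admissibility in base $q$; such a $w$ exists because $\al(q)<\al(s)$. Then choose two equal-length blocks $b_0,b_1$ each beginning with $w$ and designed so that every free concatenation $(c_i)=b_{\ep_1}b_{\ep_2}\cdots$ with $(\ep_i)\in\{0,1\}^\f$ satisfies the strict lexicographic inequalities $\sigma^n(c_i)<\al(s)$ and $\sigma^n(M-c_i)<\al(s)$ characterising membership in $\U_s'$. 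Distinct binary codings yield distinct elements of $Y$, so $h(Y)\ge(\log 2)/|b_0|>0$; since $\pi_{M+1}$ contracts length-$n$ cylinders by the factor $(M+1)^{-n}$ and is injective on $\U_r'\subseteq\U_{M+1}'$, one obtains
\[
\dim_H\pi_{M+1}(Y)\ge\frac{h(Y)}{\log(M+1)}>0.
\]

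For (i)$\Rightarrow$(iii) I would carry out the analogous construction at the level of bases. Starting from an auxiliary $s\in\U\cap(q,r)$ with unique expansion $\al(s)$ of $1$, perturb $\al(s)$ on a sparse periodic index set, at each chosen index making a binary choice between the original digit and a controlled substitute. For sufficiently large period, each perturbed sequence is the unique expansion of $1$ in some new base $q_\beta\in\U\cap(q,r)$, and distinct binary codings produce distinct bases through a H\"older continuous injection, so the image has positive Hausdorff dimension inside $\U\cap(q,r)$.

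The main obstacle in both substantive directions is the lexicographic bookkeeping required to ensure that the perturbed sequences and their reflections under $c\mapsto M-c$ remain strictly dominated by $\al(s)$ under every shift (so that $Y\subseteq\U_s'$, respectively that the perturbed bases stay in $\U$), while still containing a factor forbidden in base $q$. The hypothesis $q,s\in\U$ supplies the strict inequalities $\sigma^n\al(s)<\al(s)$ and their reflected analogues, and choosing the block lengths and the perturbation density appropriately creates the slack needed to absorb the modifications.
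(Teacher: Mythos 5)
Your overall architecture matches the paper's, and the two easy implications are handled correctly. In fact your route for (ii)$\Rightarrow$(i) -- citing de~Vries's cardinality theorem to get a countable, hence dimension-zero, difference set $\U_r'\setminus\U_q'$ for some $r$ when $q\notin\U$ -- is a legitimate shortcut that replaces the paper's Lemma~\ref{lem:34} (which proves $\dim_H\pi_{M+1}(\U_{q_0^*}'\setminus\U_{q_0}')=0$ directly by a combinatorial analysis of the sequences involved). Your (iii)$\Rightarrow$(i) argument via the component decomposition \eqref{eq:11} is exactly the paper's.

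However, in both substantive directions there is a genuine gap: the entire technical core is asserted rather than constructed. For (i)$\Rightarrow$(ii) you pick an arbitrary $s\in\U\cap(q,r)$ and claim one can ``choose two equal-length blocks $b_0,b_1$ \ldots designed so that every free concatenation'' lies in $\U_s'$. For a general $s\in\U$ this is not automatic and is in fact the crux: the membership criterion of Lemma~\ref{lem:23} must be checked at every shift, and truncating the strict inequalities $\overline{\al(s)}<\si^n(\al(s))<\al(s)$ to finite blocks can produce equalities, so that concatenated blocks may overshoot $\al(s)$. The paper avoids this by never working with an arbitrary $s$: it uses Lemma~\ref{lem:26} to replace $q$ by a de Vries--Komornik number $q_0^*\in(q,r)$, whose expansion $\al(q_0^*)$ is a generalized Thue--Morse sequence, and the self-similar inequalities of Lemma~\ref{lem:25} (together with the periodic approximants $r_n$ of Lemma~\ref{lem:31}) are precisely what make the free-concatenation verification (Lemma~\ref{lem:32}) and the choice of the forbidden prefix $\c$ (Lemma~\ref{lem:33}) go through. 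Your sketch also needs, but does not supply, the verification that the prefix $w$ is itself compatible with every shifted comparison against $\al(s)$ and $\overline{\al(s)}$. The same criticism applies to (i)$\Rightarrow$(iii): the claim that each perturbed sequence is $\al(p)$ for some $p\in\U\cap(q,r)$ requires checking the two-sided condition of Lemma~\ref{lem:24} at every shift (the content of the paper's Lemma~\ref{lem:37}, again resting on the Thue--Morse structure), and the H\"older/Lipschitz comparison between the sequence metric and the parameter $p$ is a separate nontrivial estimate (the paper's Lemma~\ref{lem:36}). In short, the skeleton is right, but the lexicographic constructions you defer to ``bookkeeping'' are the proof; without routing through the $q_0^*$'s (or an equivalent device) it is not clear the blocks you need exist for an arbitrary auxiliary base $s$.
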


\begin{theorem}\label{th:12}
Let  $q\in(1, M+1]$. The following statements are equivalent.
\begin{enumerate}
\item[{\rm(i)}]
$q\in\overline{\U}\setminus(\bigcup\set{q_0^*}\cup\set{q'})$.

\item[{\rm(ii)}]    $\dim_H\pi_{M+1}(\U_q'\setminus\U_p')>0$  for any $p\in(1, q)$.

\item[{\rm(iii)}] $\dim_H\U\cap(p, q)>0$ for any $p\in(1, q)$.

\end{enumerate}
 \end{theorem}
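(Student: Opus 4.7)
My plan is to derive (i) $\Leftrightarrow$ (ii) and (i) $\Leftrightarrow$ (iii) from Theorem \ref{th:11} combined with the topology of $\overline{\U}$. The guiding observation is that the hypothesis (i) singles out precisely those points of $\overline{\U}$ which are left accumulation points of $\overline{\U}$: by \eqref{eq:11}, $\bigcup\set{q_0^*}\cup\set{q'}$ is the set of right endpoints of the removed components of $(1,M+1]\setminus\overline{\U}$, so its complement in $\overline{\U}$ is exactly the set of points of $\overline{\U}$ which are not isolated from the left. Since $\U$ is dense in $\overline{\U}$, (i) is therefore equivalent to saying that $q$ is a left limit of elements of $\U$.

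For the implications (i) $\Rightarrow$ (ii) and (i) $\Rightarrow$ (iii), I will use the following simple transfer argument. Fix $p \in (1,q)$ and, using (i), pick $r \in \U \cap (p,q)$. Since $q \in (r,M+1]$, Theorem \ref{th:11}(iii) applied at $r$ yields $\dim_H(\U\cap(r,q)) > 0$ and therefore $\dim_H(\U\cap(p,q)) > 0$. Similarly Theorem \ref{th:11}(ii) at $r$ gives $\dim_H \pi_{M+1}(\U_q'\setminus\U_r') > 0$; since $p < r$ entails $\U_p' \subseteq \U_r'$, the inclusion $\U_q'\setminus\U_r' \subseteq \U_q'\setminus\U_p'$ transfers this lower bound to the set in (ii). The converse (iii) $\Rightarrow$ (i) is purely topological: positive Hausdorff dimension forces $\U \cap (p,q) \ne \emptyset$ for every $p<q$, so $q$ is a left limit of elements of $\U \subseteq \overline{\U}$, which is precisely (i).

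For (ii) $\Rightarrow$ (i), the theorem of de Vries from \cite{deVries_2009} quoted in the introduction already forces $q \in \overline{\U}$: positive Hausdorff dimension entails uncountability of $\U_q'\setminus\U_p'$ for every $p<q$, which is exactly de Vries's characterisation of membership in $\overline{\U}$. It remains to exclude $q \in \bigcup\set{q_0^*}\cup\set{q'}$, and this is where I expect the main obstacle to lie. The plan is to exhibit, for each such right endpoint $q$, a concrete $p < q$ for which $\pi_{M+1}(\U_q'\setminus\U_p')$ has Hausdorff dimension zero, directly contradicting (ii). When $q = q_0^*$ I take $p$ inside the gap $(q_0, q_0^*)$, so that no element of $\U$ lies strictly between $p$ and $q_0^*$; the jump $\U_{q_0^*}'\setminus\U_p'$ then admits an explicit lexicographic description in terms of the (eventually periodic) quasi-greedy expansion of $1$ in base $q_0^*$ and its reflection, placing it inside a subshift of zero topological entropy whose $\pi_{M+1}$-image has zero Hausdorff dimension. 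For $q = q'$ I take any $p \in (1,q')$ and exploit the substitutive structure of the generalized Thue--Morse sequence $\alpha(q')$ (the object named in the keywords) to put the whole of $\U_{q'}'$ inside a zero-entropy subshift, so that $\pi_{M+1}(\U_{q'}'\setminus\U_p')$ is a fortiori of zero Hausdorff dimension. The bulk of the technical work will lie in pinning down these auxiliary subshifts and in verifying that their zero entropy survives the projection $\pi_{M+1}$ as zero Hausdorff dimension.
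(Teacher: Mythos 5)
Your proposal is correct in outline and, at the level of ideas, matches the paper; the main difference is organizational. For (i) $\Rightarrow$ (ii), (i) $\Rightarrow$ (iii) and (iii) $\Rightarrow$ (i) you derive everything from Theorem \ref{th:11} applied at an auxiliary point $r\in\U\cap(p,q)$ together with the observation that (i) characterizes the left accumulation points of $\overline{\U}$ (equivalently, of $\U$); this is exactly the content of Lemma \ref{lem:26}(ii), and the paper does the same thing except that it invokes the underlying Lemmas \ref{lem:33} and \ref{lem:37} directly at a point $q_0^*\in(p,q)$ rather than passing through the statement of Theorem \ref{th:11}. Your route is slightly more economical and equally valid, given the monotonicity $\U_p'\subseteq\U_r'$ that you correctly use. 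For (ii) $\Rightarrow$ (i) you first filter through de Vries's cardinality theorem to place $q$ in $\overline{\U}$ and then must exclude $q=q_0^*$ and $q=q'$; the paper instead argues directly on the complement $(1,q']\cup\bigcup(q_0,q_0^*]$. Either way the crux is identical: one needs $\dim_H\pi_{M+1}(\U_{q_0^*}'\setminus\U_{q_0}')=0$ (the paper's Lemma \ref{lem:34}) and $\dim_H\pi_{M+1}(\U_{q'}')=0$ (the paper cites \cite{Kong_Li_Dekking_2010}). You only sketch these, but so does the paper, and your proposed mechanism (the squeeze forces sequences in the difference set into a set of subexponential complexity built from the words $g^n(\t)^+$, whose projection has dimension zero) is the right one. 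One factual slip to fix: the quasi-greedy expansion $\al(q_0^*)$ is the generalized Thue--Morse sequence $\lim_n g^n(\t)$ and is \emph{not} eventually periodic; the periodic object in the lexicographic squeeze is $\al(q_0)=\t^\f$, and the constrained structure of $\U_{q_0^*}'\setminus\U_{q_0}'$ comes from trapping tails between $\t^\f$ and $\t^+\overline{\t}\cdots$. This does not affect the validity of your plan, but the zero-dimension claims are where all the remaining technical work lies.
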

 
 It follows at once from Theorems \ref{th:11} and \ref{th:12} that $\U$ (or, equivalently, $\overline{\U}$) does not contain isolated points. 
 
 We remark that the projection map $\pi_{M+1}$ in Theorem \ref{th:11} (ii) can be replaced by $\pi_\rho$ for any $r\le \rho\le M+1$. Similarly, the projection map $\pi_{M+1}$ in Theorem \ref{th:12} (ii) can also be replaced by $\pi_\rho$ with $q\le \rho\le M+1$. We also point out that Theorems \ref{th:11}   and  \ref{th:12}   strengthen  the main result  of \cite{deVries_2009} where  the cardinality of the sets $\U_q' \setminus \U_p'$ with $1 < p < q \leq M+1$ was determined.

Let $\B_2$ be the set of bases $q \in \left(1,M+1\right]$ for which there exists a number $x\in[0, M/(q-1)]$ having exactly two $q$-expansions. It was asked by Sidorov \cite{Sidorov_2009} whether $\dim_H\B_2\cap(q', q'+\de)>0 $ for any $\de>0$,  where $q'$ is the Komornik-Loreti constant. Since $\U\subseteq\B_2$ (see \cite[Lemma 3.1]{Sidorov_2009}\footnote{This also follows directly from the observation that $q^{-1}$ has exactly two $q$-expansions whenever $q \in \U$.}),  Theorem \ref{th:11} answers this question in the affirmative.
\begin{corollary}\label{cor:13}
$\dim_H\B_2\cap(q', q'+\de)>0$ {for any} $\de>0.$
\end{corollary}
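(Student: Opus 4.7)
The plan is to deduce Corollary \ref{cor:13} directly from Theorem \ref{th:11}, combined with the inclusion $\U \subseteq \B_2$ already discussed in the paragraph preceding the statement. Two ingredients are needed. First, by definition the Komornik-Loreti constant $q'$ is the smallest element of $\U$, so in particular $q' \in \U$. Second, $\U \subseteq \B_2$, which the footnote verifies as follows: if $q \in \U$, then the unique expansion $(c_i)$ of $1$ in base $q$ produces a $q$-expansion $0 c_1 c_2 \cdots$ of $q^{-1}$ which is distinct from the greedy one $(M) 0 c_1 c_2 \cdots$ of the same number (shifted appropriately), so $q^{-1}$ has exactly two $q$-expansions and $q\in\B_2$.

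Given these, here is how I would argue. Fix $\delta > 0$ and set $r = \min\{q' + \delta,\, M+1\}$, which lies in $(q', M+1]$. Applying the implication (i) $\Rightarrow$ (iii) of Theorem \ref{th:11} to $q = q'$ yields
\[
\dim_H \U \cap (q', r) > 0.
\]
Since $(q', r) \subseteq (q', q' + \delta)$ and $\U \subseteq \B_2$, monotonicity of Hausdorff dimension gives
\[
\dim_H \B_2 \cap (q', q' + \delta) \;\ge\; \dim_H \U \cap (q', r) \;>\; 0,
\]
which is exactly the assertion of the corollary.

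There is no genuine obstacle in this argument: the entire content of Corollary \ref{cor:13} is packaged inside the equivalence (i) $\Leftrightarrow$ (iii) of Theorem \ref{th:11} together with Sidorov's inclusion $\U\subseteq\B_2$. The substantive work lies in establishing that equivalence in the first place --- specifically, in producing sufficiently many elements of $\U$ to the immediate right of a prescribed univoque base to force positive Hausdorff dimension --- and that is where I expect the main technical effort of the paper to be invested. In particular, the approach via the projection $\pi_{M+1}$ and generalized Thue--Morse sequences flagged in the introduction is presumably what makes the hard direction of Theorem \ref{th:11} work.
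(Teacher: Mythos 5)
Your proof is correct and is exactly the paper's (implicit) argument: since $q'\in\U$, Theorem \ref{th:11} (i) $\Rightarrow$ (iii) gives $\dim_H\U\cap(q',r)>0$ for $r=\min\set{q'+\de,M+1}$, and $\U\subseteq\B_2$ together with monotonicity of Hausdorff dimension finishes it. One minor slip in your aside reconstructing the footnote: the two $q$-expansions of $q^{-1}$ are $0c_1c_2\cdots$ and $10^\f$ (not $M0c_1c_2\cdots$), but this does not affect your argument since the inclusion $\U\subseteq\B_2$ is anyway Sidorov's Lemma 3.1.
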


The rest of the paper is arranged as follows. In Section \ref{sec: univoque set} we recall some properties of unique $q$-expansions. The proof of Theorems \ref{th:11} and \ref{th:12} will be given in Section \ref{sec: proofs}.

\section{Preliminaries}\label{sec: univoque set}
In this section we recall some properties of the univoque set $\U_q$. Throughout this paper, a \emph{sequence} $(d_i)=d_1d_2\cdots$ is an element of $\set{0,\ldots,M}^\f$ with each digit $d_i$ belonging to the \emph{alphabet} $\set{0,\ldots,M}$. Moreover, for a \emph{word} $\c=c_1\cdots c_n$ we mean a finite string of digits with each digit  $c_i$ from $\set{0,\ldots,M}$. For two words $\c=c_1\cdots c_n$ and $\d=d_1\cdots d_m$ we denote by $\c\d=c_1\cdots c_nd_1\cdots d_m$ the concatenation of the two words. For an integer $k\ge 1$ we denote by $\c^k$   the $k$-times concatenation of $\c$ with itself, and    by $\c^\f$   the infinite repetition of $\c$. 

For a sequence $(d_i)$ we denote its \emph{reflection} by $\overline{(d_i)}:=(M-d_1)(M-d_2)\cdots$. Accordingly, for a word $\c=c_1\cdots c_n$ we denote its  {reflection} by $\overline{\c}:=(M-c_1)\cdots (M-c_n)$.   If $c_n<M$ we denote by 
$
\c^+:=c_1\cdots c_{n-1}(c_n+1).
$
If $c_n>0$ we write  $\c^-:=c_1\cdots c_{n-1}(c_n-1)$.  

 We will use systematically the lexicographic ordering $<, \le, >$ and $\ge$ between sequences and between words. For two sequences $(c_i), (d_i)\in\set{0,1,\ldots, M}^\infty$ we say that $(c_i)<(d_i)$ if there exists an integer $n\ge 1$ such that $c_1\ldots c_{n-1}=d_1\ldots d_{n-1}$ and $c_n<d_n$. Furthermore, we write $(c_i)\le (d_i)$ if $(c_i)<(d_i)$ or $(c_i)=(d_i)$. Similarly, we say $(c_i)>(d_i)$ if $(d_i)<(c_i)$, and $(c_i)\ge (d_i)$ if $(d_i)\le (c_i)$. We extend this definition to words in the obvious way.  For example, for two words $\c$ and $\d$ we write $\c<\d$ if $\c0^\f<\d0^\f$. 

A sequence is called \emph{finite} if it has a last nonzero element. Otherwise it is called \emph{infinite}. So $0^{\infty}:=00\cdots$ is considered to be infinite.
For $q\in (1,M+1]$ we denote by
\[\al(q)=(\al_i(q))\]
the \emph{quasi-greedy} $q$-expansion of $1$ (cf.~\cite{Daroczy_Katai_1993}), i.e., the lexicographically largest \emph{infinite}  $q$-expansion of $1$.  Let
$\beta(q)=(\beta_i(q))$
 be the \emph{greedy} $q$-expansion of $1$ (cf.~\cite{Parry_1960}), i.e., the lexicographically largest  $q$-expansion of $1$. For convenience, we set $\alpha(1)=0^{\infty}$ and $\beta(1)=10^{\infty}$, even though $\alpha(1)$ is not a $1$-expansion of $1$.

  Moreover, we endow the set $\set{0,\ldots,M}$ with the discrete topology and the set of all possible sequences $\set{0,1,\ldots,M}^{\infty}$ with the Tychonoff product topology.

The following properties of $\alpha(q)$ and $\beta(q)$ were established in  ~\cite{Parry_1960}, see also ~\cite{Baiocchi_Komornik_2007}.
\begin{lemma}\label{lem:21}\mbox{}

\begin{itemize}
\item[{\rm(i)}] The map $q\mapsto \al(q)$ is an increasing bijection from $[1, M+1]$ onto the set of all {\rm infinite} sequences $(\al_i)$ satisfying
\[
\al_{n+1}\al_{n+2}\cdots\le \al_1\al_2\cdots\quad\textrm{whenever}\quad \al_n<M.
\]

\item[{\rm(ii)}] The map $q\mapsto\beta(q)$ is an increasing bijection from $[1, M+1]$ onto the set of all sequences $(\beta_i)$ satisfying
\[
\beta_{n+1}\beta_{n+2}\cdots<\beta_1\beta_2\cdots\quad\textrm{whenever}\quad \beta_n<M.
\]

\end{itemize}
\end{lemma}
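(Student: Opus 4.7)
The plan is to prove (ii) carefully and then indicate how (i) follows by analogous modifications, relying throughout on the recursive formulation of the greedy algorithm recalled in the introduction: $(c_i)$ is the greedy $q$-expansion of $\pi_q((c_i))$ if and only if $\sum_{i>n} c_i q^{-i+n} < 1$ whenever $c_n < M$.

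For the forward direction of (ii) I would fix $(\beta_i) = \beta(q)$. The tail condition above says the shifted sequence $(\beta_{n+i})_{i \ge 1}$ is a $q$-expansion of some $y_n < 1$ whenever $\beta_n < M$. Hence this shifted sequence is lex-dominated by the greedy $q$-expansion of $y_n$, which in turn is lex strictly below the greedy $q$-expansion of $1$, namely $\beta(q)$, by strict monotonicity of the map $x \mapsto \beta(q,x)$. The latter monotonicity follows from the fact that the leading greedy digit $d_1(x)$ is a non-decreasing step function of $x$, iterated to later positions. For monotonicity of $q \mapsto \beta(q)$: if $q < r$, then the greedy condition in base $q$ immediately implies the greedy condition in base $r$, so $\beta(q)$ is itself the greedy $r$-expansion of $\sum \beta_i(q) r^{-i} < 1$, giving $\beta(q) < \beta(r)$ lex strictly. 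Injectivity is immediate.

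Surjectivity is the substantive step. Given $(\beta_i)$ satisfying the stated lex condition, a direct examination of the condition forces $\beta_1 \ge 1$, and the function $f(q) := \sum \beta_i q^{-i}$ is continuous and strictly decreasing on $[1, M+1]$ with $f(1^+) \ge 1$ and $f(M+1) \le 1$, so the intermediate value theorem produces a unique $q^* \in [1, M+1]$ with $f(q^*) = 1$, exhibiting $(\beta_i)$ as a $q^*$-expansion of $1$. The main obstacle is to verify that $(\beta_i)$ is actually greedy in base $q^*$, not merely some $q^*$-expansion of $1$. I would argue by contradiction: suppose a hypothetical $q^*$-expansion $(c_i) > (\beta_i)$ of $1$ exists, with first disagreement at position $k$ (so $c_k > \beta_k$, forcing $\beta_k < M$); manipulating $\sum (c_i - \beta_i)(q^*)^{-i} = 0$ transfers mass into a numerical inequality on the tail of $\beta$, which via the lex condition must then be confronted with the first position where the shifted sequence $(\beta_{k+i})_{i \ge 1}$ falls below $(\beta_i)$. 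A careful case analysis — which is the technical heart of the proof — converts this confrontation into a contradiction.

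Part (i) is handled in parallel. The quasi-greedy expansion $\alpha(q)$ is characterized by the weak tail inequality $\sum_{i > n} \alpha_i q^{-i+n} \le 1$ whenever $\alpha_n < M$, together with the infiniteness requirement, and the steps of (ii) transfer with all strict inequalities replaced by weak ones. The infiniteness constraint is precisely what excludes the sporadic finite sequences that would otherwise yield spurious solutions; when $\beta(q) = b_1 \cdots b_k 0^{\infty}$ is finite, one uses the standard identity $\alpha(q) = (b_1 \cdots b_{k-1} (b_k-1))^{\infty}$ and verifies directly that this periodic sequence satisfies the weak lex condition, while the non-finite case gives $\alpha(q) = \beta(q)$ and reduces to the argument already carried out.
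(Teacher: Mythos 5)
The first thing to note is that the paper does not prove Lemma~\ref{lem:21} at all: it is quoted from Parry and from Baiocchi--Komornik, so your proposal can only be measured against the standard proofs in those sources, which it follows in outline. Your forward direction (the greedy and quasi-greedy expansions satisfy the stated lexicographic conditions, via the tail criterion and the strict monotonicity of $x\mapsto$ greedy expansion of $x$), the monotonicity in $q$, and the intermediate-value construction of $q^*$ in the surjectivity step are all sound.

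The genuine gap is the step you yourself label ``the technical heart'' and then omit: it is the only place in the whole lemma where the lexicographic hypothesis must be converted into a numerical inequality, and it is the substance of the result. After your mass-transfer computation, the contradiction you seek reduces exactly to showing that $S_k:=\sum_{i\ge 1}\beta_{k+i}(q^*)^{-i}<1$ whenever $\beta_k<M$ --- i.e.\ that $(\beta_i)$ satisfies the greedy criterion quoted in the Introduction (once this is known, the detour through a hypothetical larger expansion $(c_i)$ is unnecessary). This does \emph{not} follow from the single inequality $\sigma^k(\beta)<\beta$, because lexicographic order does not imply numerical order: for instance $0M^\infty<10^\infty$ lexicographically, yet $\pi_q(0M^\infty)>\pi_q(10^\infty)$ for every $q<M+1$. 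The standard remedy is to iterate the hypothesis along first disagreements: if $m_1$ is the first index with $\beta_{k+m_1}<\beta_{m_1}$, then $\beta_{k+m_1}<M$, so the hypothesis applies again at $k_2=k+m_1$, and setting $k_{j+1}=k_j+m_j$ one obtains the telescoping estimate
\[
S_{k}-1\;\le\; q^{-m_1}\bigl(S_{k_2}-1\bigr)\;\le\; q^{-(m_1+m_2)}\bigl(S_{k_3}-1\bigr)\;\le\;\cdots,
\]
which yields $S_k\le 1$ since the $S_{k_j}$ are bounded by $M/(q-1)$. A further argument is then needed to upgrade this to the strict inequality required for part (ii): assuming $S_k=1$ and comparing with $1=\sum_{i=1}^{m_1}\beta_i q^{-i}+q^{-m_1}S_{m_1}$ forces $S_{k+m_1}\ge 1+S_{m_1}$, hence $S_{m_1}=0$ and $\beta_i=0$ for all $i>m_1$, contradicting $S_{k+m_1}=1$. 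Neither the telescoping estimate nor the strictness upgrade appears in your proposal; without them surjectivity is unproved, and the weak ($\le$) version of the same estimate is equally indispensable for part (i), so the gap propagates there too.
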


\begin{lemma}
  \label{lem:22}\mbox{}
  
  \begin{itemize}
    \item[{\rm(i)}] $\beta(q)$ is infinite if and only if $\beta(q)=\al(q)$.

    \item[{\rm(ii)}] If $\beta(q)=\beta_1\cdots\beta_m 0^\f$ with $\beta_m>0$, then
    $\al(q)=(\beta_1\cdots\beta_m^-)^\f.$

\item[{\rm(iii)}] The map $q\mapsto \al(q)$ is left-continuous, while the map  $q\mapsto \beta(q)$ is right-continuous.
  \end{itemize}
\end{lemma}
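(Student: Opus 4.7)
The plan is to handle the three parts in order, with the real technical weight concentrated in the left-continuity of $\alpha$ in part (iii). Part (i) is essentially definitional: since $\beta(q)$ is the lex largest $q$-expansion of $1$ while $\alpha(q)$ is the lex largest \emph{infinite} one, the two coincide precisely when $\beta(q)$ is infinite.

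For part (ii), I would set $\tau := (\beta_1\cdots\beta_m^-)^\infty$, verify $\pi_q(\tau)=1$ by a geometric series computation starting from $\sum_{i=1}^m \beta_i q^{-i}=1$, and then verify the lex Parry condition $\sigma^n \tau \leq \tau$ whenever $\tau_n < M$. By $m$-periodicity of $\tau$ only $n\in\{1,\ldots,m\}$ need to be checked, and these reduce to the strict inequalities $\sigma^n \beta(q) < \beta(q)$ guaranteed by Lemma \ref{lem:21}(ii). Since $\tau$ is infinite, Lemma \ref{lem:21}(i) identifies $\tau = \alpha(q')$ for a unique $q'$, and then the strict monotonicity of $q \mapsto \pi_q(\tau)$ on the nonzero sequence $\tau$ forces $q'=q$.

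For part (iii), the right-continuity of $\beta$ is the easy half. Taking $q_n \downarrow q$, monotonicity makes $\beta(q_n)$ lex-decreasing and bounded below by $\beta(q)$, and a first-disagreement induction shows that each coordinate of $\beta(q_n)$ is eventually constant, giving a pointwise limit $\gamma \geq \beta(q)$ in lex. Continuity of $(q,d)\mapsto \pi_q(d)$ (a short dominated-convergence argument) then yields $\pi_q(\gamma) = \lim \pi_{q_n}(\beta(q_n)) = 1$, so $\gamma$ is itself a $q$-expansion of $1$ and hence $\gamma \leq \beta(q)$, forcing $\gamma = \beta(q)$. For the left-continuity of $\alpha$ I would run the analogous argument with $q_n \uparrow q$ to produce a limit $\gamma \leq \alpha(q)$ with $\pi_q(\gamma)=1$ and with $\gamma$ satisfying the lex Parry inequality $\sigma^n \gamma \leq \gamma$ (whenever $\gamma_n < M$), which is preserved under pointwise limits.

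The main obstacle in the $\alpha$-continuity argument is that the limit $\gamma$ could a priori be finite, say $\gamma = s_1\cdots s_m 0^\infty$ with $s_m > 0$, placing $\gamma$ outside the range of $\alpha$ and blocking a direct appeal to Lemma \ref{lem:21}(i). The key observation to dispose of this case is that with $s_m>0$ the equality $\sigma^n \gamma = \gamma$ would force $s_m=0$, so the Parry $\leq$ must actually be strict, meaning $\gamma$ satisfies the \emph{greedy} characterization of Lemma \ref{lem:21}(ii). This identifies $\gamma = \beta(q)$; but then $\alpha(q) \leq \beta(q) = \gamma \leq \alpha(q)$ forces $\alpha(q) = \beta(q)$ to be simultaneously infinite and finite, a contradiction. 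Hence $\gamma$ is infinite, Lemma \ref{lem:21}(i) applies, and the strict monotonicity of $q\mapsto \pi_q(\gamma)$ gives $\gamma = \alpha(q)$.
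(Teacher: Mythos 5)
Your proposal is correct. Note that the paper itself gives no proof of this lemma: it is recalled from Parry (1960) and Baiocchi--Komornik (2007), so there is no in-paper argument to compare against. Your argument is the standard one and is complete: part (i) is indeed immediate from the two extremality definitions; in part (ii) the verification that $(\beta_1\cdots\beta_m^-)^\infty$ satisfies the quasi-greedy condition of Lemma \ref{lem:21}(i) does reduce, after a short case check on where the decremented last digit lands, to the strict inequalities $\sigma^n\beta(q)<\beta(q)$; and the identification of the base via strict monotonicity of $p\mapsto\pi_p(\tau)$ on a fixed nonzero sequence is the right closing step. In part (iii) you correctly isolate the only genuinely delicate point, namely that the coordinatewise limit $\gamma=\lim\alpha(q_n)$ along $q_n\uparrow q$ satisfies the weak inequalities $\sigma^n\gamma\le\gamma$ but could a priori be a finite sequence outside the range of $\alpha$; your resolution (equality $\sigma^n\gamma=\gamma$ would force periodicity, hence $\gamma=0^\infty$, so the inequalities are strict, $\gamma=\beta(q)$, and then part (i) gives a contradiction with $\alpha(q)\le\beta(q)=\gamma\le\alpha(q)$) is exactly the argument needed.
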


In order to investigate the unique expansions we need the following lexicographic characterization of $\U_q'$ (cf.~\cite{Baiocchi_Komornik_2007}).
\begin{lemma}\label{lem:23}
Let $q\in(1,M+1]$. Then $(d_i)\in\U_q'$ if and only if
\[
\left\{
\begin{array}{lll}
d_{n+1}d_{n+2}\cdots <\al_1(q)\al_2(q)\cdots&\textrm{ whenever }&d_n<M,\\
d_{n+1}d_{n+2}\cdots>\overline{\al_1(q)\al_2(q)\cdots}&\textrm{ whenever} &d_n>0.
\end{array}
\right.
\]
\end{lemma}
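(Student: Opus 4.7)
The plan is to reduce the lemma to a lexicographic characterization of greediness, and then prove the latter by an iteration/descent argument. The reduction uses the observation already recorded in the introduction: $(d_i)\in\U_q'$ if and only if both $(d_i)$ and its reflection $\overline{(d_i)}=(M-d_1)(M-d_2)\cdots$ are greedy $q$-expansions. Granted this, I plan to prove the auxiliary claim that $(d_i)$ is greedy if and only if $d_{n+1}d_{n+2}\cdots<\alpha_1(q)\alpha_2(q)\cdots$ whenever $d_n<M$, and then derive Lemma \ref{lem:23} by applying the claim to $(d_i)$ (yielding the first inequality) and to $\overline{(d_i)}$ (yielding the second, after the identity $\overline{(c_i)}<\alpha(q)\Leftrightarrow(c_i)>\overline{\alpha(q)}$).

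The forward direction of the auxiliary claim is the easier one: greediness is equivalent to $\pi_q(d_{n+1}d_{n+2}\cdots)<1$ whenever $d_n<M$, where $\pi_q((c_i)):=\sum_{i=1}^{\infty}c_iq^{-i}$. I would rule out $d_{n+1}d_{n+2}\cdots\ge\alpha(q)$ by noting that equality gives $\pi_q=1$ directly, while strict inequality at the first differing index $k$ forces $\alpha_k<M$ (because the tail digit exceeds $\alpha_k$), so Lemma \ref{lem:21}(i) provides $\pi_q(\alpha_{k+1}\alpha_{k+2}\cdots)\le1$, and a short estimate then yields $\pi_q(d_{n+1}d_{n+2}\cdots)\ge1$, again a contradiction.

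For the reverse direction I would argue by contradiction. Suppose the lex condition holds but $\pi_q(T^{(0)})\ge1$ with $T^{(0)}:=d_{n+1}d_{n+2}\cdots$ and $d_n<M$. Set $e_k:=\pi_q(T^{(k)})-1$, and let $j_k$ be the first index at which $T^{(k)}$ differs from $\alpha(q)$, so $T^{(k)}_{j_k}<\alpha_{j_k}\le M$. Then the lex condition applied to $(d_i)$ at the position corresponding to the $j_k$-th entry of $T^{(k)}$ delivers $T^{(k+1)}:=T^{(k)}_{j_k+1}T^{(k)}_{j_k+2}\cdots<\alpha(q)$, and a direct computation using $\pi_q(\alpha(q))=1$ yields the recursion
\[
e_{k+1}=q^{j_k}e_k+\pi_q(\alpha_{j_k+1}\alpha_{j_k+2}\cdots)+\bigl(\alpha_{j_k}-T^{(k)}_{j_k}-1\bigr).
\]
All three summands on the right are nonnegative, and since $\alpha(q)$ is infinite by definition, $\pi_q(\alpha_{j+1}\alpha_{j+2}\cdots)>0$ for every $j$; hence $e_{k+1}>q\,e_k$ from some stage onward, and the resulting blow-up contradicts the uniform bound $e_k\le M/(q-1)-1$.

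The hard part will be this reverse direction. In isolation, a single tail $T<\alpha(q)$ can perfectly well satisfy $\pi_q(T)\ge1$ (for instance $T=0M^\infty$ as soon as $M\ge q(q-1)$), so no purely local argument is possible; one must exploit the lex condition simultaneously at infinitely many positions of $(d_i)$ and let the infiniteness of $\alpha(q)$ supply the strict positivity that drives the exponential descent of $e_k$. The remaining pieces — the easy forward direction and the symmetric treatment of $\overline{(d_i)}$ — should be routine.
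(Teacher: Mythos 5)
The paper itself offers no proof of Lemma \ref{lem:23}; it is simply quoted from Baiocchi--Komornik \cite{Baiocchi_Komornik_2007}, so there is no internal argument to compare against. Your proposal reconstructs essentially the standard proof from that literature, and the core of it is sound. The reduction to ``both $(d_i)$ and $\overline{(d_i)}$ are greedy'' is already recorded in the paper's introduction, the reflection step ($\overline{(c_i)}<\alpha(q)$ iff $(c_i)>\overline{\alpha(q)}$, and $\overline{d_n}<M$ iff $d_n>0$) is routine, and your iteration for the reverse direction is correct: the recursion $e_{k+1}=q^{j_k}e_k+\pi_q(\alpha_{j_k+1}(q)\alpha_{j_k+2}(q)\cdots)+(\alpha_{j_k}(q)-T^{(k)}_{j_k}-1)$ is exactly what subtracting $\pi_q(\alpha(q))=1$ from $\pi_q(T^{(k)})$ gives; all three terms are nonnegative since $T^{(k)}_{j_k}<\alpha_{j_k}(q)$ at the first discrepancy; the middle term is strictly positive because $\alpha(q)\neq 0^\infty$ has no last nonzero digit; and, crucially, $T^{(k)}_{j_k}<\alpha_{j_k}(q)\le M$ guarantees that the lexicographic hypothesis applies again at the next position, so the descent never stalls and $e_k\le M/(q-1)-1$ is violated. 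You are also right that no ``local'' argument can work, which is why the global iteration is needed.

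The one step you must tighten is in the forward direction: you assert that Lemma \ref{lem:21}(i) ``provides $\pi_q(\alpha_{k+1}(q)\alpha_{k+2}(q)\cdots)\le 1$.'' It does not: Lemma \ref{lem:21}(i) is a purely lexicographic statement, and, as you yourself emphasize elsewhere, a single lexicographic comparison of tails says nothing about their $\pi_q$-values. The inequality you need (only in the case $\alpha_k(q)<M$, which is the case at hand) should instead be derived from the definition of $\alpha(q)$ as the lexicographically largest \emph{infinite} expansion of $1$: if one had $\pi_q(\alpha_{k+1}(q)\alpha_{k+2}(q)\cdots)>1$, then the remainder $1-\sum_{i<k}\alpha_i(q)q^{-i}-(\alpha_k(q)+1)q^{-k}=q^{-k}\bigl(\pi_q(\alpha_{k+1}(q)\alpha_{k+2}(q)\cdots)-1\bigr)$ would lie in $\bigl[0,\,q^{-k}M/(q-1)\bigr)$, hence would admit an infinite expansion, producing an infinite $q$-expansion of $1$ beginning with $\alpha_1(q)\cdots\alpha_{k-1}(q)(\alpha_k(q)+1)$ and thus lexicographically exceeding $\alpha(q)$ --- a contradiction. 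With that two-line repair the forward direction closes, and the whole argument is complete.
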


Note that $q\in\U$ if and only if $\al(q)$ is the unique $q$-expansion of $1$. Then Lemma \ref{lem:23} yields a characterization of $\U$ (see also, \cite{Erdos_Joo_Komornik_1990} and \cite{Komornik_Loreti_2002}).
\begin{lemma}
  \label{lem:24}
 Let $q \in (1,M+1)$. Then $q\in\U$ if and only if $\al(q)=(\al_i(q))$ satisfies
  \[
  \overline{\al(q)}<\al_{n+1}(q)\al_{n+2}(q)\cdots<\al(q)\quad\textrm{for all}\quad n\ge 1.
  \]
\end{lemma}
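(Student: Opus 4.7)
The plan is to deduce Lemma~\ref{lem:24} from Lemma~\ref{lem:23} applied to the sequence $(d_i)=\alpha(q)$, and then upgrade the \emph{conditional} strict inequalities appearing there to the \emph{unconditional} ones in the statement.

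First, I would observe that $q\in\U$ is equivalent to $\alpha(q)\in\U_q'$. Since $\pi_q(\alpha(q))=1$, the membership $\alpha(q)\in\U_q'$ says precisely that $\alpha(q)$ is the unique $q$-expansion of $1$, which trivially gives $q\in\U$. Conversely, if $q\in\U$ then $\beta(q)$ must be infinite: otherwise Lemma~\ref{lem:22}(ii) would exhibit $\alpha(q)=(\beta_1\cdots\beta_m^-)^\infty$ as a second $q$-expansion of $1$. Combined with Lemma~\ref{lem:22}(i) this yields $\alpha(q)=\beta(q)$, and uniqueness forces $\alpha(q)\in\U_q'$.

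Applying Lemma~\ref{lem:23} with $(d_i)=\alpha(q)=(\alpha_i(q))$ translates $\alpha(q)\in\U_q'$ into
\begin{align*}
\alpha_n(q)<M &\ \Longrightarrow\ \alpha_{n+1}(q)\alpha_{n+2}(q)\cdots<\alpha(q),\\
\alpha_n(q)>0 &\ \Longrightarrow\ \alpha_{n+1}(q)\alpha_{n+2}(q)\cdots>\overline{\alpha(q)},
\end{align*}
for every $n\ge 1$. The converse implication in Lemma~\ref{lem:24} is then immediate, so the substance lies in upgrading these conditional statements to the unconditional inequality $\overline{\alpha(q)}<\alpha_{n+1}(q)\alpha_{n+2}(q)\cdots<\alpha(q)$ for all $n\ge 1$.

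For the upgrade, consider $n\ge 1$ with $\alpha_n(q)=M$; since $q<M+1$ we have $\alpha(q)\ne M^\infty$. I would locate the largest $k\le n$ with $\alpha_k(q)<M$ (the residual case $\alpha_1(q)=\cdots=\alpha_n(q)=M$ is dispatched by taking the smallest $j>n$ with $\alpha_j(q)<M$ and comparing leading $M$-blocks). The strict inequality at $k$ reads $M^{n-k}\alpha_{n+1}(q)\alpha_{n+2}(q)\cdots<\alpha(q)$, which forces $\alpha(q)$ to begin with $M^{n-k}$; cancelling these leading $M$'s leaves $\alpha_{n+1}(q)\alpha_{n+2}(q)\cdots<\alpha_{n-k+1}(q)\alpha_{n-k+2}(q)\cdots$. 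Combining this with the auxiliary fact that a quasi-greedy sequence satisfies $\alpha_{i+1}(q)\alpha_{i+2}(q)\cdots\le\alpha(q)$ for \emph{every} $i\ge 1$ (not only when $\alpha_i(q)<M$; this follows from Lemma~\ref{lem:21}(i) by a short case analysis) yields $\alpha_{n+1}(q)\alpha_{n+2}(q)\cdots<\alpha(q)$, as desired. The lower bound by $\overline{\alpha(q)}$ in the case $\alpha_n(q)=0$ follows by the mirror argument, using $\alpha_1(q)>0$ (valid because $q>1$) in place of the exclusion $\alpha(q)\ne M^\infty$.

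I expect the main obstacle to be precisely this upgrade step, and in particular the unconditional quasi-greedy inequality just mentioned: it is what allows one to slide from the conditional inequality available at $k$ to the target inequality at $n$, yet it has to be extracted from Lemma~\ref{lem:21}(i) rather than read off directly from it.
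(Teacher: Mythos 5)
Your proposal is correct and follows the same route the paper takes: the paper derives Lemma~\ref{lem:24} directly from Lemma~\ref{lem:23} applied to $\alpha(q)$ (citing \cite{Erdos_Joo_Komornik_1990,Komornik_Loreti_2002} rather than writing out details). The only difference is that you make explicit the upgrade from the conditional inequalities of Lemma~\ref{lem:23} to the unconditional ones, via the auxiliary fact $\sigma^i(\alpha(q))\le\alpha(q)$ for all $i$ and the observation $\alpha(q)\ne M^\infty$ for $q<M+1$; this step is sound as written.
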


Consider a connected component $(q_0, q_0^*)$ of $(q', M+1)\setminus\overline{\U}$ as in (\ref{eq:11}). Then there exists a (unique)  word $\t=t_1\cdots t_p$ such that (cf.~\cite{DeVries_Komornik_2008,Kong_Li_2015})
\[
\alpha(q_0)=\t^\f\quad\textrm{and}\quad \al(q_0^*)=\lim_{n\ra\f}g^n(\t),
\]
where $g^n=\underbrace{g\circ\cdots\circ g}_n$ denotes the $n$-fold composition of $g$ with itself, and
 \begin{equation}\label{eq:g}
g(\c) :=\c^+\overline{\c^+}\quad\textrm{for any word}\quad \c=c_1\ldots c_k\textrm{ with }c_k<M.
\end{equation}
We point out that the word $\t=t_1\ldots t_p$ in the definitions of $\alpha(q_0)$ and $\al(q_0^*)$ is called an \emph{admissible block} in \cite[Definition 2.1]{Kong_Li_2015} which satisfies the following lexicographical inequalities: $t_p<M$ and for any $1\le i\le p$ we have 
\[
\overline{t_1\cdots t_p}\le t_i\cdots t_pt_1\cdots t_{i-1}\quad\textrm{and}\quad t_i\cdots t_p\;\overline{t_1\ldots t_{i-1}}\le t_1\ldots t_p^+.
\]
We also mention that the limit $\lim_{n\ra\f}g^n(\t)$ stands for the infinite sequence beginning with
$
\t^+\overline{\t}\,\overline{\t^+}\t^+\;\overline{\t^+}\t\,\t^+\overline{\t}\cdots,
$
and the existence of this limit was shown by Allouche \cite{Allouche_Cosnard_1983}.

 In this case $(q_0, q_0^*)$ is called the  \emph{connected component  generated by $\t$}. The closed interval $[q_0, q_0^*]$ is the so called \emph{admissible interval generated by $\t$} (see \cite[Definition 2.4]{Kong_Li_2015}). Furthermore,  the sequence
 \[
 \al(q_0^*)=\lim_{n\ra\f}g^n(\t)=\t^+\,\overline{\t}\;\overline{\t^+}\,\t^+~ \overline{\t^+}\, \t\, \t^+\,\overline{\t}\cdots
 \]
   is  a  {generalized Thue-Morse sequence} (cf.~\cite[Definition 2.2]{Kong_Li_2015}, see also  \cite{Allouche_Shallit_1999}).

The following lemma for the generalized Thue-Morse sequence  $\al(q_0^*)$ was established in \cite[Lemma 4.2]{Kong_Li_2015}.
\begin{lemma}\label{lem:25}
Let $(q_0, q_0^*)\subset(q', M+1)\setminus\overline{\U}$ be a connected component generated by $t_1\cdots t_p$.  Then the sequence $(\th_i)=\al(q_0^*)$ satisfies
\[
\overline{\th_1\cdots \th_{2^n p-i}}<\th_{i+1}\cdots \th_{2^n p}\le\th_1\cdots \th_{2^n p-i}
\]
for any $n\ge 0$ and any $0\le i<2^n p$.
\end{lemma}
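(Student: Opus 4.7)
The natural approach is induction on $n$, after rewriting the claim to expose the self-similar structure of $\th=\al(q_0^*)$. Set $L_n:=2^np$ and $\sigma_n:=\th_1\cdots\th_{L_n}$. Using the definition of $g$ and the fact that the iterates $g^k(\t)$ stabilize on longer and longer initial blocks, one checks that $\sigma_n$ admits the clean recursion
\[
\sigma_0=\t^+,\qquad \sigma_{n+1}=\sigma_n\cdot(\overline{\sigma_n})^+,
\]
where $(\overline{\sigma_n})^+$ denotes $\overline{\sigma_n}$ with its final digit incremented by $1$ (well defined because the last digit of $\sigma_n$ is strictly positive). In this notation the desired statement reads: for every $n\ge 0$ and every $0\le i<L_n$,
\[
\overline{\th_1\cdots\th_{L_n-i}}<\th_{i+1}\cdots\th_{L_n}\le\th_1\cdots\th_{L_n-i},\qquad(*_n)
\]
which we prove by induction on $n$.

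The base case $n=0$ reduces to an inequality about $\t^+$, which follows from truncating the two admissibility conditions on $\t$ recalled in the paper (with index $j=i+1$) to their first $p-i$ symbols. The only delicate point is upgrading $\le$ to $<$ on the left; here the single-digit increment at position $p$ carried by $\t^+$ provides exactly the strictness needed, once one also uses that the admissible block $\t$ generating the connected component has minimal period $p$, so that no proper cyclic shift of $\t$ can agree with $\t$ on its first $p-i$ symbols without forcing a contradiction in the admissibility inequalities.

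For the inductive step, set $A:=\sigma_n$, so $\sigma_{n+1}=A\cdot(\overline{A})^+$, and split the range $0\le i<L_{n+1}=2L_n$ into two cases according to whether the suffix $\th_{i+1}\cdots\th_{2L_n}$ straddles the midpoint (Case~1: $0\le i<L_n$) or lies entirely in the second half (Case~2: $L_n\le i<2L_n$). In either case both sides of $(*_{n+1})$ become concatenations of subwords of $A$ and $(\overline{A})^+$, and the inductive hypothesis $(*_n)$, applied at index $i$ in Case~1 or at $j=i-L_n$ in Case~2, controls the first $L_n-i$ (respectively $L_n-j$) positions: its strict left inequality immediately forces the strict left inequality of $(*_{n+1})$, and its non-strict right inequality forces the right inequality of $(*_{n+1})$ unless equality occurs throughout those positions.

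The main obstacle is precisely that boundary subcase, where one must continue the comparison into the remaining $L_n$ positions and invoke $(*_n)$ a second time at the complementary index $L_n-i$ (resp.\ $L_n-j$). The crucial observation is that the single-digit perturbation between $\overline{A}$ and $(\overline{A})^+$ at position $L_n$ does exactly the right thing: it supplies the slack required to preserve the right inequality when the two halves of the word happen to be mutual reflections, and it produces the strictness on the left when the reflected prefix would otherwise tie with the suffix. Once this interaction is tracked carefully, both inequalities in $(*_{n+1})$ follow, completing the induction.
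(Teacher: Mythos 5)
The paper offers no proof of this lemma at all: it simply quotes it from Kong--Li (2015), Lemma 4.2, so your induction cannot be compared with an in-paper argument and has to be judged on its own. Its skeleton is correct. Writing $\xi_n=\th_1\cdots\th_{2^np}=g^n(\t)^+$, the recursion $\xi_{n+1}=\xi_n\,\overline{\xi_n}\,^+$ is exactly the identity the authors record at the start of the proof of Lemma \ref{lem:31}, and your inductive step goes through as described: in each half of the range the hypothesis $(*_n)$ applied at $i$ (resp.\ at $j=i-2^np$) settles the first block of the comparison, and in the only nontrivial subcase a second application of $(*_n)$ at the complementary index $2^np-i$, together with the one-digit increment separating $\overline{\xi_n}$ from $\overline{\xi_n}\,^+$, yields the required strict or non-strict inequality.

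The genuine gap is in the base case $n=0$, and you have placed the delicacy on the wrong side. The left inequality $\overline{t_1\cdots t_{p-i}}<t_{i+1}\cdots t_p^+$ is the easy one: truncating the first admissibility inequality gives $\overline{t_1\cdots t_{p-i}}\le t_{i+1}\cdots t_p$, and the increment of the final digit upgrades this to a strict inequality for free. The real problem is the right inequality $t_{i+1}\cdots t_p^+\le t_1\cdots t_{p-i}$: truncating the second admissibility inequality only gives $t_{i+1}\cdots t_p\le t_1\cdots t_{p-i}$, the increment now works against you, and you must exclude the border case $t_{i+1}\cdots t_p=t_1\cdots t_{p-i}$. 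Your proposed mechanism (minimal period plus the two displayed inequalities) does not do this. For $M=3$ the primitive word $\t=212$ satisfies both displayed inequalities for every index ($121\le 212,\,122,\,221$ and $212,\,121,\,212\le 213$), yet $t_3=t_1$, so the right inequality at $n=0$, $i=2$ would read $t_3^+=3\le 2=t_1$, which is false. That word is of course not an admissible block, but only because $(212)^\f$ fails to be quasi-greedy ($\si^2((212)^\f)=(221)^\f>(212)^\f$) --- a property your argument never uses. The repair is to invoke Lemma \ref{lem:21}(i) for $\al(q_0)=\t^\f$: if $t_{i+1}\cdots t_p=t_1\cdots t_{p-i}$, then $\si^i(\t^\f)\le\t^\f$ is equivalent to $\t^\f\le\si^{p-i}(\t^\f)$, which combined with $\si^{p-i}(\t^\f)\le\t^\f$ forces $\si^{p-i}(\t^\f)=\t^\f$; hence $\t^\f$ has period $\gcd(p,i)<p$, contradicting the minimality of $p$. (The case $i=0$ of the left inequality also needs $t_1>M/2$, which comes from $q_0>q'$ rather than from the admissibility inequalities.) With these two repairs the base case, and hence the whole induction, is complete.
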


Finally,  we recall some topological properties of $\U$ and $\overline{\U}$ which were essentially established in \cite{DeVries_Komornik_2008,Komornik_Loreti_2007} (see also, \cite{DeVries_Komornik_Loreti_2016}).
\begin{lemma}\label{lem:26}\mbox{}

\begin{itemize}
\item[{\rm(i)}] If $q\in\U$, then there exists a {\rm decreasing} sequence $(r_n)$ of elements in $\bigcup\set{q_0^*}$  that  converges  to $q$ as $n\ra\f$;
\item[{\rm(ii)}] If $q\in\overline{\U}\setminus(\bigcup\set{q_0^*}\cup\set{q'})$, then there exists an {\rm increasing} sequence $(p_n)$ of elements in $\bigcup\set{q_0^*}$ that  converges to $q$ as $n\ra\f$.
\end{itemize}
\end{lemma}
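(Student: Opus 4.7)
The plan is to exploit the Cantor-set structure of $\overline{\U}$ together with the decomposition \eqref{eq:11}, which writes every gap of $\overline{\U}$ apart from the leftmost $(1,q')$ in the form $(q_0,q_0^*)$ with $q_0^*\in\U$. In both parts the approximating sequences will be produced by combining two facts: $\overline{\U}$ has no isolated points (so points of $\overline{\U}$ accumulate from both sides at any interior point), and $\overline{\U}$ has empty interior (so every open subinterval of $(1,M+1]$ contains a gap).

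For part (i), let $q\in\U\subseteq\overline{\U}$ (with $q<M+1$, else the conclusion is vacuous). First I pick a strictly decreasing sequence $s_n\in\overline{\U}$ with $s_n\ra q$, which exists by the no-isolated-points property. Because $\overline{\U}$ has empty interior, each open interval $(q,s_n)$ must contain a point outside $\overline{\U}$, hence a point of some gap $(q_{0,n},q_{0,n}^*)$; since both $q$ and $s_n$ lie in $\overline{\U}$, this gap is contained in $(q,s_n)$. In particular $q<q_{0,n}^*\le s_n$, so $q_{0,n}^*\ra q$ from above, and a strictly decreasing subsequence yields the desired $(r_n)$. The inequality $q_{0,n}^*>q\ge q'$ guarantees that each right endpoint is one of the indexed $q_0^*$ in \eqref{eq:11} rather than the special point $q'$.

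For part (ii), let $q\in\overline{\U}\setminus(\bigcup\set{q_0^*}\cup\set{q'})$; since $q'=\min\overline{\U}$, we have $q>q'$, and I may pick a strictly increasing sequence $s_n\in\overline{\U}$ with $s_n\ra q$ and (eventually) $s_n>q'$. Each $(s_n,q)$ meets some gap $(q_{0,n},q_{0,n}^*)$, and the fact that $s_n,q\in\overline{\U}$ forces $s_n\le q_{0,n}<q_{0,n}^*\le q$. The hypothesis that $q$ is not a right endpoint of any gap excludes the case $q_{0,n}^*=q$, so $q_{0,n}^*<q$, while $s_n>q'$ ensures the gap is not $(1,q')$ and hence $q_{0,n}^*\in\bigcup\set{q_0^*}$. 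Extracting a strictly increasing subsequence yields the required $(p_n)$.

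The argument is essentially topological, and I expect the only real subtlety to be the bookkeeping that confirms the extracted right endpoints belong to the indexed family $\bigcup\set{q_0^*}$ (rather than coinciding with $q$, $q'$, or falling outside the range of interest); in each case this reduces to an elementary position check that follows immediately from the hypothesis on $q$ and the side from which the approximating sequence $s_n$ is chosen.
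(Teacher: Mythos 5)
Your overall strategy --- deriving the lemma from the Cantor-set structure of $\overline{\U}$ together with the gap decomposition \eqref{eq:11} --- is the natural one; the paper itself does not prove this lemma but recalls it from \cite{DeVries_Komornik_2008,Komornik_Loreti_2007}, so there is no internal proof to compare against. There is, however, one genuine gap, and it occurs at the very first step of each part: the existence of a \emph{one-sided} approximating sequence $(s_n)$ in $\overline{\U}$ does not follow from the absence of isolated points. That property only guarantees that every neighbourhood of $q$ meets $\overline{\U}\setminus\set{q}$; all such points could lie on one side of $q$. Indeed, your argument for (i), as written, uses the hypothesis $q\in\U$ only to conclude $q\ge q'$, and would therefore apply verbatim to any $q\in\overline{\U}\cap[q',M+1)$ --- in particular to a left endpoint $q=q_0$ of a connected component, for which the conclusion of (i) is false because $(q_0,q_0^*)\cap\overline{\U}=\emptyset$. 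So the step ``pick $s_n\in\overline{\U}$ strictly decreasing to $q$'' is exactly where the hypothesis $q\in\U$ must do its work, and you have not used it there.

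The repair is short and uses only facts you already rely on elsewhere. For (i): if $(q,q+\varepsilon)\cap\overline{\U}=\emptyset$ for some $\varepsilon>0$, then $(q,q+\varepsilon)$ lies in a connected component of $(1,M+1]\setminus\overline{\U}$ whose left endpoint is $q$ (it cannot be the component $(1,q')$ since $q\ge q'$); but the paper records, alongside \eqref{eq:11}, that the left endpoints $q_0$ run over $\overline{\U}\setminus\U$, contradicting $q\in\U$. For (ii) the analogous point arises: if $\overline{\U}$ did not accumulate at $q$ from the left, then $q$ would be the right endpoint of a component, i.e.\ $q\in\bigcup\set{q_0^*}$, or $q=q'$ --- precisely what the hypothesis excludes. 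You invoke that hypothesis only later, to rule out $q_{0,n}^*=q$, but it is already needed to produce $(s_n)$. With these two observations inserted, the rest of your bookkeeping (each interval $(q,s_n)$, resp.\ $(s_n,q)$, contains an entire gap whose right endpoint belongs to $\bigcup\set{q_0^*}$ and tends to $q$ from the correct side) is correct.
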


We remark here that the bases $q_0^*$ are called \emph{de Vries-Komornik numbers} which were shown to be transcendental in \cite{Kong_Li_2015}. By Lemma \ref{lem:26} it follows that the set of  de Vries-Komornik numbers is dense in $\overline{\U}$.

\section{Proofs of Theorems \ref{th:11} and \ref{th:12} }\label{sec: proofs}

\subsection{ Proof  of Theorem \ref{th:11} for (i) $\Leftrightarrow$ (ii).}

For each connected component $(q_0, q_0^*)$ of $(q', M+1)\setminus\overline{\U}$  we  construct a sequence of bases $(r_n)$ in $\U$ strictly decreasing to $q_0^*$.
\begin{lemma}\label{lem:31}
Let $(q_0, q_0^*)\subset (q', M+1)\setminus\overline{\U}$ be a connected component generated by  $t_1\cdots t_p$, and let $(\th_i)=\al(q_0^*)$. Then for each $n\ge 1$, the number $r_n\in\U$ determined by 
\[
\al(r_n)=\beta(r_n)=\th_1\cdots \th_{2^n p}(\th_{2^n p+1}\cdots \th_{2^{n+1}p})^\f,
\]
belongs to $\U$.
Furthermore, $(r_n)$ is a strictly decreasing sequence that converges to $q_0^*$.
\end{lemma}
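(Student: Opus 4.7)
My plan is to verify the characterization of $\U$ from Lemma~\ref{lem:24}: that
\[
\omega := \theta_1\cdots\theta_{2^np}\,(\theta_{2^np+1}\cdots\theta_{2^{n+1}p})^\infty
\]
satisfies $\overline{\omega} < \sigma^k(\omega) < \omega$ for every $k\ge 1$, where $\sigma$ denotes the shift. Once this is established, Lemmas~\ref{lem:21} and~\ref{lem:22} produce a unique $r_n\in(1,M+1]$ with $\alpha(r_n)=\beta(r_n)=\omega$ (note that $\omega$ is manifestly infinite, since its periodic block contains the nonzero digit $M-\theta_{2^np}+1$), and Lemma~\ref{lem:24} then delivers $r_n\in\U$. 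Throughout I abbreviate $N=2^np$, $\mathbf{u}=\theta_1\cdots\theta_N$, $\mathbf{v}=\theta_{N+1}\cdots\theta_{2N}$, so $\omega=\mathbf{u}\mathbf{v}^\infty$.

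The linchpin is the identity $\mathbf{v}_i=M-\mathbf{u}_i$ for $1\le i<N$ and $\mathbf{v}_N=M-\mathbf{u}_N+1$, i.e., $\mathbf{v}=\overline{\mathbf{u}}^+$. This is obtained by unwinding the recursion $g^{m+1}(\t)=g^m(\t)^+\,\overline{g^m(\t)^+}$: the sequence $\theta$ coincides with $g^{n+1}(\t)$ on positions $1,\ldots,2N-1$ and with $g^{n+2}(\t)$ at position $2N$, and the ``$+1$'' in $\mathbf{v}_N$ is precisely the increment applied to the last position of $g^{n+1}(\t)$. This single-digit deviation from $\overline{\mathbf{u}}$ is what will drive every contradiction below.

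Because $\omega$ is eventually periodic with period $N$, direct reduction modulo $N$ shows that it suffices to verify the two inequalities for $1\le k\le 2N-1$. For $\sigma^k(\omega)<\omega$ I would split into three sub-cases. \emph{Sub-case} $1\le k<N$: Lemma~\ref{lem:25} with $2^{n+1}p=2N$ and $i=k$ gives $\theta_{k+1}\cdots\theta_{2N}\le\theta_1\cdots\theta_{2N-k}$, and any strict instance transfers to $\omega$ since $\omega$ and $\theta$ coincide on $[1,2N]$. In the equality case, $\theta$ would have period $k$ on $[1,2N]$: the relation $\theta_{N-k}=\theta_N$ forces $\mathbf{u}_{N-k}=\mathbf{u}_N$, while $\theta_{2N-k}=\theta_{2N}$ translates through $\mathbf{v}=\overline{\mathbf{u}}^+$ into $\mathbf{u}_{N-k}=\mathbf{u}_N-1$, a contradiction. \emph{Sub-case} $k=N$: $\sigma^N(\omega)=\mathbf{v}^\infty$; Lemma~\ref{lem:25} with $i=N-1$ gives $\overline{\mathbf{u}_1}<\mathbf{u}_N\le\mathbf{u}_1$, hence $\mathbf{u}_1>M/2$ and $\mathbf{v}_1=M-\mathbf{u}_1<\mathbf{u}_1$, so $\mathbf{v}<\mathbf{u}$. \emph{Sub-case} $N<k<2N$, with $k=N+k'$: $\sigma^k(\omega)=\sigma^{k'}(\mathbf{v}^\infty)$, and Lemma~\ref{lem:25} with $i=k'$ gives $\overline{\mathbf{u}_1\cdots\mathbf{u}_{N-k'}}<\mathbf{u}_{k'+1}\cdots\mathbf{u}_N$ strictly. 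Tracing the first strict position through $\mathbf{v}=\overline{\mathbf{u}}^+$ furnishes the desired strict digit; the only awkward possibility is that the strict position lands at $N-k'$ with $\mathbf{u}_N+\mathbf{u}_{N-k'}=M+1$, in which case the $+1$ anomaly produces equality there, and one invokes Lemma~\ref{lem:25} a second time, now with $i=N-k'$, to produce strictness within the next $k'$ positions (which reach only up to $N$, so no further $+1$ appears). The reverse inequality $\overline{\omega}<\sigma^k(\omega)$ is entirely parallel, using the left-hand strict inequality in Lemma~\ref{lem:25}; equality-case contradictions again arise from the $+1$ at $\mathbf{v}_N$.

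For monotonicity and convergence, I would compare $\alpha(r_n)$ and $\alpha(r_{n+1})$ digit by digit: they agree with $\theta$, and thus with each other, on positions $1,\ldots,3N-1$, but at position $3N$ one has $\alpha(r_n)_{3N}=\mathbf{v}_N=M-\mathbf{u}_N+1$ while $\alpha(r_{n+1})_{3N}=\theta_{3N}=M-\mathbf{u}_N$ (computed via $g^{n+2}(\t)$). Consequently $\alpha(r_n)>\alpha(r_{n+1})>\theta=\alpha(q_0^*)$ lexicographically, and Lemma~\ref{lem:21} yields $q_0^*<r_{n+1}<r_n$. As $n\to\infty$ the position of first disagreement $3N\to\infty$, so $\alpha(r_n)\to\theta$ pointwise; combining this with the right-continuity of $\beta$ in Lemma~\ref{lem:22}(iii) and $\beta(r_n)=\alpha(r_n)$, the decreasing limit $q^*=\lim_n r_n$ satisfies $\beta(q^*)=\alpha(q_0^*)=\beta(q_0^*)$, so $q^*=q_0^*$. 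The main obstacle is the third sub-case above, where the $+1$ anomaly forces the two-step invocation of Lemma~\ref{lem:25}; all other steps are direct consequences of Lemma~\ref{lem:25} and the structure $\mathbf{v}=\overline{\mathbf{u}}^+$.
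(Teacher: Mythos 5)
Your proposal is correct and follows essentially the same route as the paper's proof: establish the self-similarity $\theta_{2^np+k}=\overline{\theta_k}$ for $k<2^np$ with $\theta_{2^{n+1}p}=\overline{\theta_{2^np}}{}^+$, verify the conditions of Lemma~\ref{lem:24} by a case analysis on the shift using Lemma~\ref{lem:25}, and obtain monotonicity and convergence from a digit-by-digit comparison (first disagreement at position $3\cdot 2^np$) together with the right-continuity of $\beta$. Your handling of the sub-cases differs only cosmetically (e.g.\ invoking Lemma~\ref{lem:25} at level $n+1$ and excluding equality by a periodicity contradiction, and deriving $\theta_1>M-\theta_1$ from Lemma~\ref{lem:25} rather than from $\alpha_1(q')=[M/2]+1$), and all such steps check out.
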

\begin{proof}
Using (\ref{eq:g}) one may verify that the sequence $(\th_i)$ satisfies
\[
\th_{2^n p+k} =\overline{\th_k }\quad\textrm {for all}\quad1\le k<2^n p;\quad\th_{2^{n+1}p} =\overline{\th_{2^n p} }\,^+
\]
for all $n\ge 0$.
Now fix $n\ge 1$. We claim   that
\begin{equation}\label{eq:31}
\si^i\left(\th_1\cdots \th_{2^n p}(\th_{2^n p+1}\cdots \th_{2^{n+1}p})^\f\right)<\th_1\cdots \th_{2^n p}(\th_{2^n p+1}\cdots \th_{2^{n+1}p})^\f
\end{equation}
for all $i\ge 1$, where $\si$ is the left shift on $\set{0,\ldots, M}^\f$ defined by $\si((c_i))=(c_{i+1})$. By periodicity it  suffices  to prove (\ref{eq:31}) for $0<i<2^{n+1}p$. We distinguish between the following three cases: (I) $0< i<2^np$; (II) $i=2^n p$; (III) $2^n p<i<2^{n+1}p$.

Case (I). $0< i<2^n p$. Then by Lemma \ref{lem:25} it follows that
\[
\th_{i+1}\cdots \th_{2^n p}\le \th_1\cdots \th_{2^n p-i}
\]
and
\[
\th_{2^n p+1}\cdots \th_{2^np +i}=\overline{\th_1\cdots\th_i}<\th_{2^n p-i+1}\cdots \th_{2^n p}.
\]
This implies (\ref{eq:31}) for $0<i<2^n p$.

Case (II). $i=2^n p$. Note by \cite{Komornik_Loreti_2002} that $\al_1(q')=[M/2]+1$ (see also, \cite{Baker_2014}), where $[y]$ denotes the integer part of a real number $y$. Then by using  $q_0^*>q'$ in Lemma \ref{lem:21} we have
\[\th_1=\al_1(q_0^*)\ge\al_1(q')>\overline{\al_1(q')}\ge\overline{\th_1}.\]
This, together with $n\ge 1$,  implies
\[
\th_{2^n p+1}\cdots\th_{2^{n+1} p}=\overline{\th_1\cdots\th_{2^np}}\,^+<\th_1\cdots\th_{2^n p}.
\]
So, (\ref{eq:31}) holds true for $i=2^n p$.

Case (III). $2^n p< i<2^{n+1}p$. Write $j=i-2^n p$. Then $0< j<2^n p$. Once again, we infer from Lemma \ref{lem:25} that
\[
\th_{i+1}\cdots \th_{2^{n+1}p}=\overline{\th_{j+1}\cdots \th_{2^n p}}\,^+\le \th_1\cdots\th_{2^n p-j}
\]
and
\[
\th_{2^n p+1}\cdots\th_{2^n p+j}=\overline{\th_1\cdots\th_j}<\th_{2^np-j+1}\cdots\th_{2^n p}.
\]
This yields (\ref{eq:31}) for $2^n p<i<2^{n+1}p$.

Note by Lemma \ref{lem:25} that
\[
\sigma^i\left(\th_1\cdots \th_{2^n p}(\th_{2^n p+1}\cdots \th_{2^{n+1}p})^\f\right)>\overline{\th_1\cdots \th_{2^n p}(\th_{2^n p+1}\cdots \th_{2^{n+1}p})^\f}
\]
for any $i\ge 0$.
 Then by (\ref{eq:31}) and  Lemma \ref{lem:24} it follows that there exists $r_n\in\U$ such that
\[\al(r_n)=\beta(r_n)=\th_1\cdots\th_{2^n p}(\th_{2^n p+1}\cdots \th_{2^{n+1}p})^\f.\]

In the following we prove $r_n\searrow q_0^*$ as $n\ra\f$. For $n\ge 1$ we observe that
\begin{align*}
\beta(r_{n+1})=\th_1\cdots\th_{2^{n+1}p}(\th_{2^{n+1}p+1}\cdots\th_{2^{n+2}p})^\f
 &=\th_1\cdots\th_{2^{n}p}\overline{\th_1\cdots\th_{2^n p}}\,^+\overline{\th_1\cdots\th_{2^n p}}\cdots\\
&<\th_1\cdots\th_{2^n p}\,(\overline{\th_1\cdots\th_{2^n p}}^+)^\f=\beta(r_n).
\end{align*}
Then by Lemma \ref{lem:21} (ii) we have $r_{n+1}<r_{n}$. Note that $\beta(q_0^*)=\al(q_0^*)=(\th_i)$, and
\[\beta(r_n)\ra (\th_i)=\beta(q_0^*)\quad\textrm{as}\quad n\ra\f.\]
Hence, we conclude from  Lemma \ref{lem:22} (iii) that $r_n\searrow q_0^*$ as $n\ra\f$. 
\end{proof}

\begin{lemma}\label{lem:32}
Let $(q_0, q_0^*)\subset (q',M+1)\setminus\overline{\U}$ be a connected component generated by  $t_1\cdots t_p$, and let  $(\th_i)=\al(q_0^*)$. Then for  any $n\ge 1$ and  any $0\le i< 2^{n}p$ we have
\begin{equation}\label{eq:32}
\begin{split}
&\overline{\th_1\cdots\th_{2^{n+1}p-i}}<\si^i(\xi_n\overline{\xi_n}) <\th_1\cdots\th_{2^{n+1}p-i},\\
&\overline{\th_1\cdots\th_{2^{n+1}p-i}}< \si^i(\xi_n\overline{\xi_n^-})  \le\th_1\cdots\th_{2^{n+1}p-i},\\
&\overline{\th_1\cdots\th_{2^{n+1}p-i}}< \si^i(\xi_n^-\xi_n) <\th_1\cdots\th_{2^{n+1}p-i},
\end{split}
\end{equation}
and thus (by symmetry),
\begin{align*}
&\overline{\th_1\cdots\th_{2^{n+1}p-i}}<\si^i(\overline{\xi_n}\xi_n) <\th_1\cdots\th_{2^{n+1}p-i},\\
&\overline{\th_1\cdots\th_{2^{n+1}p-i}}\le \si^i(\overline{\xi_n} {\xi_n^-})  <\th_1\cdots\th_{2^{n+1}p-i},\\
 &\overline{\th_1\cdots\th_{2^{n+1}p-i}}< \si^i(\overline{\xi_n^-}\,\overline{\xi_n}) <\th_1\cdots\th_{2^{n+1}p-i},
\end{align*}
 where   $\xi_n:=\th_1\cdots\th_{2^n p}$.
 \end{lemma}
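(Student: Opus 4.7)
The plan is to reduce all six inequalities to Lemma~\ref{lem:25} applied at levels $n$ and $n+1$, together with the key recursion for the generalized Thue--Morse sequence already exploited in the proof of Lemma~\ref{lem:31}. Writing $N := 2^n p$, this recursion gives $\th_{N+k} = \overline{\th_k}$ for $1 \le k < N$ and $\th_{2N} = \overline{\th_N}^+$, so one obtains the identities
\[
\xi_{n+1} = \xi_n\,\overline{\xi_n^-} \qquad\text{and}\qquad \xi_n\,\overline{\xi_n} = \xi_{n+1}^-.
\]
Consequently $\xi_n\overline{\xi_n^-}$, $\xi_n\overline{\xi_n}$ and $\xi_n^-\xi_n$ agree with $\xi_{n+1}$ everywhere except, respectively, at no position, at position $2N$ (decreased by one), and at position $N$ (decreased by one). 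The three ``by symmetry'' inequalities will follow from the first three by digit-wise reflection, since reflection reverses the lexicographic order while swapping the two outer bounds in (\ref{eq:32}).

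The second inequality of (\ref{eq:32}) comes out immediately: $\si^i(\xi_n\overline{\xi_n^-}) = \si^i(\xi_{n+1}) = \th_{i+1}\cdots\th_{2N}$, so Lemma~\ref{lem:25} with $n$ replaced by $n+1$ yields $\overline{\th_1\cdots\th_{2N-i}} < \si^i(\xi_n\overline{\xi_n^-}) \le \th_1\cdots\th_{2N-i}$ for every $0 \le i < 2N$, which contains the range $0 \le i < N$ of interest.

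For the first inequality of (\ref{eq:32}) the upper bound is free: $\si^i(\xi_n\overline{\xi_n})$ is obtained from $\si^i(\xi_{n+1})$ by decreasing the last coordinate by one, so it is strictly less than $\th_1\cdots\th_{2N-i}$. For the lower bound I will compare only the first $N-i$ coordinates: those of $\si^i(\xi_n\overline{\xi_n}) = \th_{i+1}\cdots\th_N\,\overline{\th_1}\cdots\overline{\th_N}$ are $\th_{i+1}\cdots\th_N$, while those of $\overline{\th_1\cdots\th_{2N-i}}$ are $\overline{\th_1\cdots\th_{N-i}}$, and the strict inequality $\th_{i+1}\cdots\th_N > \overline{\th_1\cdots\th_{N-i}}$ furnished by Lemma~\ref{lem:25} at level $n$ does the job.

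The third inequality of (\ref{eq:32}), applied to $\si^i(\xi_n^-\xi_n) = \th_{i+1}\cdots\th_{N-1}\th_N^-\th_1\cdots\th_N$, is treated in the same spirit. The upper bound follows from $\th_{i+1}\cdots\th_N \le \th_1\cdots\th_{N-i}$ (Lemma~\ref{lem:25} at level $n$) together with the replacement of the last of the first $N-i$ coordinates by $\th_N - 1$, which forces a strict inequality. The lower bound is the delicate step and is where I expect the main obstacle. The easy subcase, where Lemma~\ref{lem:25} at level $n$ already witnesses strict inequality at some position $k < N-i$ in the comparison $\th_{i+1}\cdots\th_N > \overline{\th_1\cdots\th_{N-i}}$, causes no trouble. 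The borderline subcase is when $\th_{i+1}\cdots\th_{N-1} = \overline{\th_1\cdots\th_{N-i-1}}$ and $\th_N = \overline{\th_{N-i}}+1$, so that $\th_N^- = \overline{\th_{N-i}}$ and the first $N-i$ coordinates of the two sides coincide; there I will compare the next $i$ coordinates --- namely $\th_1\cdots\th_i$ on the left and $\overline{\th_{N-i+1}\cdots\th_N}$ on the right --- and invoke Lemma~\ref{lem:25} at level $n$ with shift $N-i$, which gives $\overline{\th_1\cdots\th_i} < \th_{N-i+1}\cdots\th_N$; after digit-wise reflection this reads $\th_1\cdots\th_i > \overline{\th_{N-i+1}\cdots\th_N}$, which settles the borderline case and completes the proof.
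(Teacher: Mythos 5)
Your proof is correct and follows essentially the same route as the paper: the identities $\xi_n\overline{\xi_n^-}=\th_1\cdots\th_{2^{n+1}p}$ and $\xi_n\overline{\xi_n}=\th_1\cdots\th_{2^{n+1}p}^-$ reduce the first two lines of \eqref{eq:32} to Lemma~\ref{lem:25} at level $n+1$, the third line is handled by the same two-block comparison (the first $2^np-i$ digits, then the next $i$ digits) via Lemma~\ref{lem:25} at level $n$, and the remaining three inequalities follow by reflection. The only blemish is the overview sentence claiming that $\xi_n^-\xi_n$ agrees with $\xi_{n+1}$ except at position $2^np$ --- this is false, since the second halves $\xi_n$ and $\overline{\xi_n^-}$ differ --- but your actual argument for the third inequality never uses this claim and works directly with $\th_{i+1}\cdots\th_{2^np}^-\,\th_1\cdots\th_{2^np}$, so nothing is affected.
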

\begin{proof}
By symmetry it suffices to prove (\ref{eq:32}).

Note that
$ \xi_n\overline{\xi_n}=\th_1\cdots\th_{2^{n+1}p}^-$ and $\xi_n\overline{\xi_n^-}=\th_1\cdots\th_{2^{n+1}p}.$
Then by Lemma \ref{lem:25} it follows that
\[
\overline{\th_1\cdots\th_{2^{n+1}p-i}}<\si^i(\xi_n\overline{\xi_n})<\th_1\cdots\th_{2^{n+1}p-i}
\]
and
\[
\overline{\th_1\cdots\th_{2^{n+1}p-i}}<\si^i(\xi_n\overline{\xi_n^-})\le \th_1\cdots\th_{2^{n+1}p-i}
\]
for any $0\le i<2^{n}p$.

So, it suffices to prove the inequalities
\begin{equation}\label{eq:k33}
\overline{\th_1\cdots\th_{2^{n+1}p-i}}<\si^i(\th_1\cdots\th_{2^n p}^-\th_1\cdots\th_{2^n p})<\th_1\cdots\th_{2^{n+1}p-i}
\end{equation}
for any $0\le i<2^{n}p$.
By Lemma \ref{lem:25} it follows that for any $0\le i<2^n p$ we have 
\[
\overline{\th_1\cdots\th_{2^n p-i}}\le\th_{i+1}\cdots\th_{2^n p}^-<\th_1\cdots\th_{2^n p-i}
\]
and
\[
\th_1\cdots\th_i>\overline{\th_{2^n p-i+1}\cdots\th_{2^n p}}.
\]
This proves \eqref{eq:k33}.
\end{proof}

\begin{lemma}\label{lem:33}
Let $(q_0, q_0^*)\subset (q',M+1)\setminus\overline{\U}$ be a connected component generated by  $t_1\cdots t_p$. Then
$
\dim_H\pi_{M+1}(\U_r'\setminus\U_{q_0^*}')>0
$ for any $r\in(q_0^*, M+1]$.
\end{lemma}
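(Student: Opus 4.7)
The plan is to use Lemma \ref{lem:31} to reduce the problem to a statement about $\U_{r_n}'$ for some $r_n\in\U$ with $q_0^*<r_n<r$, and then to build an injective encoding $\Phi\colon\{0,1\}^{\mathbb N}\to\U_{r_n}'\setminus\U_{q_0^*}'$ so that $\pi_{M+1}(\operatorname{range}(\Phi))$ is a Cantor-type set of positive Hausdorff dimension. Since $\U_{r_n}'\subseteq\U_r'$ whenever $r_n<r$ (an inclusion already noted in the introduction), it suffices to prove $\dim_H\pi_{M+1}(\U_{r_n}'\setminus\U_{q_0^*}')>0$.

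For the encoding $\Phi$, I would substitute the binary digits $0,1$ by two fixed words $W_0,W_1$ of a common length $L$ assembled from the level-$n$ half-words $\xi_n,\xi_n^-,\overline{\xi_n},\overline{\xi_n^-}$ supplied by Lemma \ref{lem:32}. The natural building blocks are the ``strict-both-sides'' words $\xi_n\overline{\xi_n}$, $\xi_n^-\xi_n$ and their reflections; however, since pure concatenations of these stay strictly below $\alpha(q_0^*)=(\theta_i)$ at every shift and hence lie inside $\U_{q_0^*}'$, one must additionally insert a deliberate ``push-over'' factor, such as $\xi_n\overline{\xi_n^-}=\theta_1\cdots\theta_{2^{n+1}p}$, which forces some tail of the resulting sequence to be $\ge\alpha(q_0^*)$. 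A clean choice is to let $W_0$ and $W_1$ share a common push-over factor (guaranteeing the exclusion from $\U_{q_0^*}'$) while differing in a neutral strict factor (enabling the binary encoding).

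The construction then reduces to two lex verifications. First, by Lemma \ref{lem:23} each $\Phi((\epsilon_i))$ lies in $\U_{r_n}'$ iff the strict inequalities $\overline{\alpha(r_n)}<\sigma^k\Phi((\epsilon_i))<\alpha(r_n)$ hold at every $k\ge 0$; this is checked position by position against the eventually periodic pattern $\alpha(r_n)=\xi_n(\overline{\xi_n^-})^\infty$ from Lemma \ref{lem:31}, using the inequalities of Lemma \ref{lem:32} together with their reflected forms and comparisons across block boundaries. Second, the push-over factor automatically produces a position $k$ at which $\sigma^k\Phi((\epsilon_i))\ge\alpha(q_0^*)$, so $\Phi((\epsilon_i))\notin\U_{q_0^*}'$. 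Injectivity of $\Phi$ is immediate from the distinctness of $W_0,W_1$, and a standard Bernoulli/self-similar Cantor-set estimate yields
\[
\dim_H\pi_{M+1}\bigl(\operatorname{range}(\Phi)\bigr)\ge\frac{\log 2}{L\log(M+1)}>0.
\]

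The main technical obstacle is the first verification. Lemma \ref{lem:32} gives strict bounds only for intra-block shifts $0\le i<2^np$ within a block of length $2^{n+1}p$, while the conditions of Lemma \ref{lem:23} need to be checked for \emph{every} $k$. Handling shifts in the second half of a block (by symmetry) and, more delicately, across the boundary between two consecutive blocks---where the tail must be compared with the periodic pattern $\overline{\xi_n^-}\overline{\xi_n^-}\cdots$ in $\alpha(r_n)$---is the place where the proof will require the most careful bookkeeping, and this is what dictates the precise choice of $W_0$ and $W_1$.
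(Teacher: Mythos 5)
Your plan is essentially the paper's proof: the paper also passes to $r_n\in(q_0^*,r)\cap\U$ via Lemma \ref{lem:31}, assembles from the four blocks $\xi_n,\xi_n^-,\overline{\xi_n},\overline{\xi_n^-}$ a subshift of finite type $X_A^{(n)}\subseteq\U_{r_n}'\subseteq\U_r'$ (Lemmas \ref{lem:32} and \ref{lem:23} dispose of exactly the cross-boundary comparisons you flag, since every shift lands inside an admissible two-block window and the transition $\overline{\xi_n^-}\to\overline{\xi_n}$ handles the one case of equality with a prefix of $\alpha(r_n)$), prepends the fixed word $\xi_n^-\xi_n\overline{\xi_n^-}(\overline{\xi_n}\xi_n)^3$ as the push-over forcing a tail $>\theta_1\cdots\theta_{2^{n+3}p}$ and hence exclusion from $\U_{r_{n+2}}'\supseteq\U_{q_0^*}'$, and finishes with a graph-directed/open-set-condition dimension count. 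The only cosmetic differences are that the paper uses a $4$-state SFT with a single excluding prefix rather than a full shift on two long words $W_0,W_1$ each containing a recurring push-over factor, giving the bound $\log\bigl((1+\sqrt{5})/2\bigr)/(2^np\log(M+1))$ in place of your $\log 2/(L\log(M+1))$.
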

\begin{proof}
Take $r\in(q_0^*, M+1]$. By Lemma \ref{lem:31} there exists $n\ge 1$ such that
\[r_{n}\in(q_0^*,{  r})\cap\U.\]
Write $(\th_i)=\al(q_0^*)$  and let  $\xi_n=\th_1\cdots\th_{2^n p}$.
Denote by $X_A^{(n)}$
  the subshift of finite type over the states $\set{\xi_n, \xi_n^-, \overline{\xi_n}, \overline{\xi_n^-}}$ with adjacency matrix
  \[
A=\left(
\begin{array}{cccc}
 0&0 &1   &1   \\
  1&0&0   &0   \\
  1&1&0   &0\\
  0&0&1&0
\end{array}
\right).
\]
Note that $\al(r_n)=\th_1\cdots\th_{2^n p}(\th_{2^n p+1}\cdots\th_{2^{n+1}p})^\f$.
Then by Lemmas \ref{lem:32} and \ref{lem:23}  it follows that
\begin{equation}\label{eq:34}
X_A^{(n)}\subseteq\U_{r_{n}}'\subseteq\U_r'.
\end{equation}

 Furthermore, note that
 \begin{align*}
 \xi_n\overline{\xi_n^-}(\overline{\xi_n} \xi_n)^3&=\th_1\cdots\th_{2^{n+1}p}(\overline{\th_1\cdots\th_{2^{n+1}p}}\,^+)^3\\
&=\th_1\cdots\th_{2^{n+2}p}(\overline{\th_1\cdots\th_{2^{n+1}p}}\,^+)^2\\ 
 &>\th_1\cdots\th_{2^{n+2}p}\overline{\th_1\cdots \th_{2^{n+1}p}\th_{2^{n+1}p+1}\cdots\th_{2^{n+2}p}}\,^+\\
 &=\th_1\cdots\th_{2^{n+2}p}\th_{2^{n+2}p+1}\cdots\th_{2^{n+3}p}.
 \end{align*}
 Then by Lemmas \ref{lem:23} and \ref{lem:31} it follows that any sequence starting at
 \[\c:=\xi_n^- \xi_n\overline{\xi_n^-}(\overline{\xi_n} \xi_n)^3 \] can not belong to $\U_{r_{n+2}}'$. Therefore, by (\ref{eq:34})   we obtain
 \begin{equation}\label{eq:35}
 \begin{split}
 X_A^{(n)}(\c)  &:=\set{(d_i)\in X_A^{(n)}: d_1\cdots d_{(2^{n+3}+2^n)p}=\c} \subseteq X_A^{(n)}\setminus\U_{r_{n+2}}'  \subset\U_r'\setminus\U_{q_0^*}'.
 \end{split}
 \end{equation}

 Note that the subshift of finite type $X_A^{(n)}$ is irreducible (cf.~\cite{Lind_Marcus_1995}), and the image $\pi_{M+1}(X_A^{(n)})$ is a graph-directed set satisfying the open set condition (cf.~\cite{Mauldin_Williams_1988}). Then by (\ref{eq:35}) it follows that
 \begin{align*}
 \dim_H\pi_{M+1}(\U_r'\setminus\U_{q_0^*}') &\ge\dim_H\pi_{M+1}(X_A^{(n)}(\c))\\
 &=\dim_H\pi_{M+1}(X_A^{(n)})
 =\frac{\log \left((1+\sqrt{5})/2\right)}{2^n p\log(M+1)}>0.
 \end{align*}
\end{proof}

The following lemma can be shown in a way which resembles closely the analysis in \cite[Page 2829--2830]{Kong_Li_Dekking_2010}. For the sake of completeness  we include a sketch of its proof.
\begin{lemma}\label{lem:34}
Let $(q_0, q_0^*)\subset (q',M+1)\setminus\overline{\U}$ be a connected component. Then $\dim_H\pi_{M+1}(\U_{q_0^*}'\setminus\U_{q_0}')=0$.
\end{lemma}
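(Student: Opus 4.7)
My plan is to use the lexicographic characterization in Lemma~\ref{lem:23}, the self-similar structure of the generalized Thue--Morse sequence $(\theta_i)=\alpha(q_0^*)$ (encoded by the substitution $g$ and the inequalities of Lemma~\ref{lem:25}), to exhibit $\U_{q_0^*}'\setminus\U_{q_0}'$ as a countable union of subshifts of zero topological entropy, then conclude by the Lipschitz projection $\pi_{M+1}$.

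First I would reduce the problem. A sequence $(d_i)\in\U_{q_0^*}'\setminus\U_{q_0}'$ satisfies the $(\theta_i)$-inequalities of Lemma~\ref{lem:23} at every position but fails at least one $\t^\f$-inequality at some position $N$; since $\t^\f<(\theta_i)$, this forces either $d_N<M$ and $\t^\f\le\sigma^N((d_i))<(\theta_i)$, or the reflected condition $d_N>0$ and $\overline{(\theta_i)}<\sigma^N((d_i))\le\overline{\t^\f}$. The involution $(d_i)\mapsto\overline{(d_i)}$ preserves $\U_q'$ for every $q$ (by Lemma~\ref{lem:23}) and induces the isometry $x\mapsto 1-x$ on $I_{M+1}$, so I may treat only the first case. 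Unionising over $N$ and over the $(M+1)^N$ possible prefixes $d_1\cdots d_N$ then reduces the statement to proving $\dim_H\pi_{M+1}(B_0)=0$ for
\[
B_0:=\set{(c_i)\in\U_{q_0^*}' \;:\; \t^\f\le(c_i)<(\theta_i)}.
\]

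Next I would analyse $B_0$ via the Thue--Morse hierarchy. Writing $\xi_n=\theta_1\cdots\theta_{2^n p}$, the recursion $\xi_{n+1}=\xi_n\overline{\xi_n^-}$ coming from (\ref{eq:g}), together with the tight bounds of Lemma~\ref{lem:25} for the shifts of $\xi_n$, enables an induction on $n$ showing that every length-$2^n p$ window of a sequence in $B_0$ lies in a bounded finite list of ``level-$n$'' blocks (essentially $\xi_n,\xi_n^-,\overline{\xi_n},\overline{\xi_n^-}$ and their periodic $\t$-continuations), with admissible concatenations dictated by the substitution $g$. Consequently the closed shift-invariant hull of $B_0$ is covered by a $g$-substitution subshift, and hence has topological entropy zero. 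This is the combinatorial heart of the argument and mirrors the analysis on pp.~2829--2830 of~\cite{Kong_Li_Dekking_2010}. Zero topological entropy gives $\dim_H B_0=0$ in the product metric $d((c_i),(d_i))=(M+1)^{-\min\set{i\,:\,c_i\ne d_i}}$ via a standard cylinder-covering estimate, and since $\pi_{M+1}$ is Lipschitz in this metric, the countable union over $N$ and the finite prefix choices yields $\dim_H\pi_{M+1}(\U_{q_0^*}'\setminus\U_{q_0}')=0$.

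The main obstacle is the inductive block decomposition in the second paragraph: one has to verify, using Lemma~\ref{lem:25} at every scale $2^n p$, that the lexicographic constraints imposed by $\U_{q_0^*}'$ together with $(c_i)\in[\t^\f,(\theta_i))$ genuinely force each length-$2^n p$ window into the prescribed short list of blocks, with no new degrees of freedom appearing as $n$ grows. This is exactly the bookkeeping already carried out in the analogous setting of~\cite{Kong_Li_Dekking_2010}, which is why the paper signals that the proof ``resembles closely'' that analysis.
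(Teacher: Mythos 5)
Your proposal is correct and follows essentially the same route as the paper's own sketch: reduce via Lemma~\ref{lem:23} to tails squeezed between $\t^\f$ and $\al(q_0^*)$ (the paper's inequalities \eqref{eq:k2}--\eqref{eq:k3}), show that the substitution structure of $g$ forces these tails into a family of subexponential word-complexity built from the blocks $g^n(\t)^+$ and their reflections, and conclude from the exponentially growing block lengths together with countable stability and the Lipschitz projection $\pi_{M+1}$. Both arguments defer the same block-decomposition bookkeeping to \cite[pp.~2829--2830]{Kong_Li_Dekking_2010}, so there is no substantive difference in approach.
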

 
\begin{proof}[Sketch of the proof]
Suppose that $(q_0, q_0^*)$ is a connected component generated by $\t=t_1\cdots t_p$. Then
\begin{equation}\label{eq:36}
\al(q_0)=\t^\f \quad\textrm{and}\quad \al(q_0^*)=\lim_{n\ra\f}g^n(\t)=\t^+\,\overline{\t}\;\overline{\t^+}\,\t^+ \cdots,
\end{equation}
where $g(\cdot)$ is defined in (\ref{eq:g}).

For $n\ge 0$ let $\om_n:=g^n(\t)^+$.  Take $(d_i)\in\U_{q_0^*}'\setminus\U_{q_0}'$.  Then by using (\ref{eq:36}) and Lemma \ref{lem:23}  it follows that there exists $m\ge 1$ such that
\begin{equation}\label{eq:k2}
\t^\f=\al(q_0)\le d_{m+1}d_{m+2}\cdots<\al(q_0^*)=\t^+\overline{\t} \cdots,
\end{equation}
or symmetrically,
\begin{equation}\label{eq:k3}
\t^\f=\al(q_0)\le \overline{d_{m+1}d_{m+2}\cdots}<\al(q_0^*)=\t^+\overline{\t} \cdots.
\end{equation}

 Suppose $(d_{m+i})\ne \t^\f$ and $(d_{m+i})\ne\overline{\t^\f}$. Then there exists $u\ge m$ such that
\[d_{u+1}\cdots d_{u+p}=\t^+=\om_0\quad\textrm{or}\quad d_{u+1}\cdots d_{u+p}=\overline{\t^+}=\overline{\om_0}.\]

 \begin{itemize}
   \item If $d_{u+1}\cdots d_{u+p}=\om_0=\t^+$, then by (\ref{eq:k2}) and Lemma \ref{lem:23} it follows that
       \[
       d_{u+p+1}\cdots d_{u+2p}=\overline{\t^+} \quad\textrm{or}\quad   d_{u+p+1}\cdots d_{u+2p}=\overline{\t}.
       \]
       This implies
       $d_{u+1}\cdots d_{u+2p}=\t^+\overline{\t^+} =\om_0\,\overline{\om_0}$ or $  d_{u+1}\cdots d_{u+2p}=\t^+\overline{\t}=\om_1.$

   \item If $d_{u+1}\cdots d_{u+p}=\overline{\om_0}=\overline{\t^+}$, then by (\ref{eq:k3}) and Lemma \ref{lem:23} it follows that
   \[
   d_{u+p+1}\cdots d_{u+2p}={\t^+} \quad\textrm{or}\quad d_{u+p+1}\cdots d_{u+2p}=\t.
   \]
   This yields that $d_{u+1}\cdots d_{u+2p}=\overline{\om_0}\,\om_0$ or $d_{u+1}\cdots d_{u+2p}=\overline{\om_1}$.
 \end{itemize}
 Note that for each $n\ge 0$ the word $g^n(\t)^+\,\overline{g^n(\t)}$ is a prefix of $\al(q_0^*)$.   By iteration of the above arguments, one can show that  if $d_{v+1}\cdots d_{v+2^n p}=\om_n$, then $d_{v+1}\cdots d_{v+2^{n+1}p}=\om_n\overline{\om_n}$ or $\om_{n+1}$. Symmetrically, if $d_{v+1}\cdots d_{v+2^n p}=\overline{\om_n}$, then $d_{v+1}\cdots d_{v+2^{n+1}p}=\overline{\om_n}\om_n$ or $\overline{\om_{n+1}}$.

 Hence, we conclude that $(d_{i})$ must   end with
\[
\t^* (\om_{i_0}\overline{\om_{i_0}})^{*}(\om_{i_0}\overline{\om_{j_0}})^{s_0}(\om_{i_1}\overline{\om_{i_1}})^{*}
 (\om_{i_1}\overline{\om_{j_1}})^{s_1}\cdots(\om_{i_n}\overline{\om_{i_n}})^{*}(\om_{i_n}\overline{\om_{j_n}})^{s_n}\cdots
\]
or its reflections, where   $s_n\in\set{0,1}$ and
\[
0=i_0<j_0\le i_1<j_1\le i_2<\cdots\le i_n<j_n\le i_{n+1}<\cdots.
\]
Here $*$ is an element of the set $\set{0,1,2,\ldots} \cup \set{\f}$.

 Since the length of   $\om_n=g^n(\t)^+$ grows exponentially fast as $n\ra\f$, we conclude that $\dim_H\pi_{M+1}(\U_{q_0^*}'\setminus\U_{q_0}')=0$. 
\end{proof}

\begin{proof}[Proof of Theorem \ref{th:11} for (i) $\Leftrightarrow$ (ii)]
First we prove (i) $\Rightarrow$ (ii). If $q=q_0^*$ is the right endpoint of a connected component of $(q', M+1)\setminus\overline{\U}$, then by Lemma \ref{lem:33} we have
\begin{equation*}
\dim_H\pi_{M+1}(\U_r'\setminus\U_q')>0\quad\textrm{for any }\quad r\in(q, M+1].
\end{equation*}
Clearly, it is trivial when $q=M+1$. Now we take $q\in(\U\setminus\set{M+1})\setminus\bigcup\set{q_0^*}$  and take $r\in(q, M+1]$. By Lemma \ref{lem:26} (i) one can find $q_0^*\in(q,r)$, and therefore by Lemma \ref{lem:33} we obtain
\[
\dim_H\pi_{M+1}(\U_r'\setminus\U_q')\ge\dim_H\pi_{M+1}(\U_r'\setminus\U_{q_0^*}')>0.
\]

Now we prove (ii) $\Rightarrow$ (i). Take $q\in(1, M+1]\setminus\U$. We will show that $\dim_H\pi_{M+1}(\U_r'\setminus\U_q')=0$ for some $r\in(q, M+1]$.     Note that $\bigcup\set{q_0}=\overline{\U}\setminus\U$. Then by (\ref{eq:11}) it follows that
\[
q\in(1,q')\cup\bigcup[q_0, q_0^*).
\]
Therefore, it suffices to prove $\dim_H\pi_{M+1}(\U_r'\setminus\U_q')=0$ for some $r\in(q, M+1]$. We distinct the following  two cases.

Case (I). $q\in(1,q')$. Then for any $r\in(q,q')$ we have
\[
\dim_H\pi_{M+1}(\U_r'\setminus\U_q')\le\dim_H\pi_{M+1}(\U_r')=0,
\]
where the last equality follows by \cite[Theorem 4.6]{Kong_Li_Dekking_2010} (see also, \cite{Baker_2014,Glendinning_Sidorov_2001}).

Case (II). $q\in[q_0, q_0^*)$. Then for any $r\in(q,q_0^*)$ we have by Lemma \ref{lem:34} that
\[
\dim_H\pi_{M+1}(\U_r'\setminus\U_q')\le\dim_H\pi_{M+1}(\U_{q_0^*}'\setminus\U_{q_0}')=0.
\] 
\end{proof}

\subsection{ Proof  of Theorem \ref{th:11} for (i) $\Leftrightarrow$ (iii).}
The following property  for the  Hausdorff dimension is well-known (cf.~\cite[Proposition 2.3]{Falconer_1990}).
\begin{lemma}
  \label{lem:35}
  Let $f: (X, d_1)\ra (Y, d_2)$ be a map between two metric  spaces .  If there exist constants $C>0$ and $\la>0$ such that
  \[
  d_2(f(x), f(y))\le C d_1(x, y)^\la
  \]
  for any $x, y\in X$, then $\dim_H X\ge \la\dim_H f(X)$.
\end{lemma}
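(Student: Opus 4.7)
The plan is to work directly from the definition of Hausdorff measure and to observe that a H\"older continuous map distorts covers in a controlled way. Recall that for $s\ge 0$ the $s$-dimensional Hausdorff measure of a subset $E$ of a metric space is
\[
\mathcal{H}^s(E)=\lim_{\delta\searrow 0}\mathcal{H}^s_\delta(E),\qquad \mathcal{H}^s_\delta(E):=\inf\sum_i |U_i|^s,
\]
where the infimum runs over countable covers $\{U_i\}$ of $E$ with diameters $|U_i|\le \delta$, and $\dim_H E=\inf\{s\ge 0:\mathcal{H}^s(E)=0\}$.

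The crucial step is the transfer of covers. First I would take an arbitrary $\delta$-cover $\{U_i\}$ of $X$. Then $\{f(U_i\cap X)\}$ covers $f(X)$, and the H\"older hypothesis gives $|f(U_i\cap X)|\le C|U_i|^\la\le C\delta^\la$. Consequently, for every $s\ge 0$,
\[
\sum_i |f(U_i\cap X)|^s\le C^s\sum_i |U_i|^{\la s}.
\]
Taking the infimum over all $\delta$-covers of $X$ yields $\mathcal{H}^s_{C\delta^\la}(f(X))\le C^s\,\mathcal{H}^{\la s}_\delta(X)$, and sending $\delta\searrow 0$ gives the comparison $\mathcal{H}^s(f(X))\le C^s\,\mathcal{H}^{\la s}(X)$.

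To finish, for any $s>\dim_H X/\la$ we have $\la s>\dim_H X$, whence $\mathcal{H}^{\la s}(X)=0$; by the displayed inequality this forces $\mathcal{H}^s(f(X))=0$, and hence $\dim_H f(X)\le s$. Letting $s$ decrease to $\dim_H X/\la$ gives $\la\dim_H f(X)\le \dim_H X$, which is the assertion.

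There is no substantive obstacle here: this is the standard H\"older-distortion estimate for Hausdorff measures (compare Falconer, Proposition~2.3, which is the reference cited in the statement). The only care required is in the order of the two limiting operations, namely first passing to the infimum over covers of $X$ at scale $\delta$, and only afterward letting $\delta\searrow 0$ so that the diameter bound $C\delta^\la$ on the induced cover of $f(X)$ also tends to zero.
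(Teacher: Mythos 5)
Your argument is correct and is precisely the standard Hölder-distortion proof of Falconer's Proposition 2.3, which is exactly what the paper cites for this lemma (the paper gives no proof of its own). The cover-transfer estimate $\mathcal{H}^s_{C\delta^\lambda}(f(X))\le C^s\,\mathcal{H}^{\lambda s}_\delta(X)$ and the subsequent limit in $\delta$ are handled in the right order, so there is nothing to add.
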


\begin{lemma}\label{lem:36}
Let $q\in {\U}\setminus\set{M+1}$. Then for any $r\in(q, M+1)$ we have
\[
\dim_H\U\cap(q, r)\ge\dim_H\pi_{M+1}\left(\set{\al(p): p\in\U\cap(q, r)}\right).
\]
\end{lemma}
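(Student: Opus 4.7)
The plan is to apply Lemma~\ref{lem:35} (with exponent $\la=1$) to the map $\Phi\colon\U\cap(q,r)\to\R$ defined by $\Phi(p):=\pi_{M+1}(\al(p))$, whose image is precisely $\pi_{M+1}(\set{\al(p):p\in\U\cap(q,r)})$. Once $\Phi$ is shown to be Lipschitz with constant depending only on $q$ and $r$, Lemma~\ref{lem:35} immediately delivers the desired inequality $\dim_H\U\cap(q,r)\ge\dim_H\Phi(\U\cap(q,r))$.

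To prove Lipschitz continuity, I would fix $p_1<p_2$ in $\U\cap(q,r)$ and let $n\ge 1$ denote the smallest index with $\al_n(p_1)\ne\al_n(p_2)$, so that $c_i:=\al_i(p_1)=\al_i(p_2)$ for $i<n$. Because $p_1>q>1$ and $\al$ is increasing by Lemma~\ref{lem:21}(i), one has $c_1\ge\al_1(q)\ge 1$. A routine digit-wise estimate gives $|\Phi(p_2)-\Phi(p_1)|\le(M+1)^{-(n-1)}$. Subtracting the identities $\pi_{p_j}(\al(p_j))=1$ for $j=1,2$ and isolating the common prefix produces
\[
\sum_{i=1}^{n-1}c_i\bigl(p_1^{-i}-p_2^{-i}\bigr)\;=\;G_n(p_2)-G_n(p_1),\qquad G_n(p):=\sum_{i\ge n}\al_i(p)p^{-i}.
\]
Using $c_1\ge 1$ and the mean value theorem, the left side is at least $(p_2-p_1)/(M+1)^2$, while the quasi-greedy inequality $\sigma^{n-1}\al(p_j)\le\al(p_j)$ of Lemma~\ref{lem:21}(i) gives $p_j^{n-1}G_n(p_j)\le 1$, bounding the right side above by $p_2^{-(n-1)}$. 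Combining these yields $p_2-p_1\le(M+1)^2 p_2^{-(n-1)}$.

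The main obstacle is to upgrade these estimates into the Lipschitz inequality $|\Phi(p_2)-\Phi(p_1)|\le C(q,r)(p_2-p_1)$. Since the trivial upper bound on $|\Phi(p_2)-\Phi(p_1)|$ equals $(M+1)\cdot(M+1)^{-n}$, it suffices to establish the matching lower bound $p_2-p_1\ge c(q,r)(M+1)^{-n}$ for some $c(q,r)>0$. The key extra input is the uniqueness hypothesis (Lemma~\ref{lem:24}), which forces $\sigma^{n-1}\al(p_j)$ to lie \emph{strictly} between $\overline{\al(p_j)}$ and $\al(p_j)$. These strict inequalities exclude the extremal tail configurations---such as $\al(p_1)$ ending in $M^\infty$ or $\al(p_2)$ ending in $0^\infty$---in which the identity $G_n(p_2)-G_n(p_1)=A(p_2-p_1)$ (with $A$ bounded below by $(M+1)^{-2}$) could otherwise collapse into a regime where $p_2-p_1$ is much smaller than $(M+1)^{-n}$. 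The strict inequality $r<M+1$ then furnishes the uniform quantitative control needed to extract a positive constant $c(q,r)$, completing the Lipschitz estimate and hence the proof.
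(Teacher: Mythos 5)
Your overall strategy is the paper's: show that $f=\pi_{M+1}\circ\al$ is Lipschitz (in fact H\"older with exponent $1$) on $\U\cap(q,r)$ and invoke Lemma~\ref{lem:35}. The set-up (first disagreement index $n$, the bound $|\Phi(p_2)-\Phi(p_1)|\le (M+1)^{1-n}$, and the recognition that everything reduces to $p_2-p_1\ge c(q,r)(M+1)^{-n}$) matches the paper. But the proposal stops exactly where the proof begins: the estimates you actually carry out give $p_2-p_1\le (M+1)^2p_2^{-(n-1)}$, i.e.\ the \emph{wrong direction}, and the final paragraph, which is supposed to produce the matching lower bound, is purely descriptive ("strict inequalities exclude the extremal tail configurations\dots\ $r<M+1$ furnishes the uniform quantitative control"). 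No quantitative mechanism is exhibited, so the Lipschitz constant $C(q,r)$ is never obtained. This is a genuine gap, not a routine detail.

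Concretely, the paper gets the lower bound from two ingredients your decomposition discards. First, since $r<M+1$, Lemma~\ref{lem:21} gives an integer $N$ with $\al_1(r)\cdots\al_N(r)<M^N$; combined with Lemma~\ref{lem:23} applied to $\al(p_2)\in\U_{p_2}'$, this forces a nonzero digit among $\al_{n+1}(p_2),\dots,\al_{n+N}(p_2)$, whence $\sum_{i>n}\al_i(p_2)p_2^{-i}\ge p_2^{-(n+N)}$ --- this is the quantitative form of "$r<M+1$ gives uniform control" that you assert but do not derive. Second, the paper uses the digit jump at position $n$ itself: $\al_n(p_2)\ge\al_n(p_1)+1$ together with the quasi-greedy tail bound $\sum_{i>n}\al_i(p_1)p_1^{-i}\le p_1^{-n}$ yields $\sum_{i=1}^{n}\al_i(p_2)p_1^{-i}\ge 1$. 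Subtracting $1>\sum_{i=1}^{n}\al_i(p_2)p_2^{-i}+p_2^{-(n+N)}$ gives $\sum_{i=1}^{n}\al_i(p_2)\bigl(p_1^{-i}-p_2^{-i}\bigr)>p_2^{-(n+N)}$, and the left-hand side is at most $M(p_2-p_1)/(q'-1)^2$, which is the desired lower bound on $p_2-p_1$. Your identity $\sum_{i=1}^{n-1}c_i\bigl(p_1^{-i}-p_2^{-i}\bigr)=G_n(p_2)-G_n(p_1)$ isolates only the common prefix $c_1\cdots c_{n-1}$ and throws away the information that the $n$-th digits differ; it is even vacuous when $n=1$ (empty sum), a case your sketch cannot handle. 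To repair the proof you should work with the first $n$ digits of $\al(p_2)$ as above rather than with the common prefix and the two tails $G_n(p_j)$.
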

\begin{proof}
Fix $q\in {\U}\setminus\set{M+1}$ and $r\in(q, M+1)$. Then   Lemma \ref{lem:26}   yields that $\U\cap(q, r)$ contains infinitely many elements.
Take $p_1, p_2\in\U\cap(q, r)$ with $p_1<p_2$. Then by Lemma \ref{lem:21} we have $\al(p_1)<\al(p_2)$. So, there exists $n\ge 1$ such that
\begin{equation}\label{eq:k1}
\al_1(p_1)\cdots\al_{n-1}(p_1)=\al_1(p_2)\cdots\al_{n-1}(p_2)\quad\textrm{and}\quad \al_n(p_1)<\al_n(p_2).
\end{equation}
This implies
\begin{equation}
  \label{eq:37}
  \begin{split}
  \pi_{M+1}(\al(p_2))-\pi_{M+1}(\al(p_1))&=\sum_{i=1}^\f\frac{\al_i(p_2)-\al_i(p_1)}{(M+1)^i}\\
  &\le\sum_{i=n}^\f\frac{M}{(M+1)^i}=(M+1)^{1-n}.
  \end{split}
\end{equation}

Note that $r<M+1$. By Lemma \ref{lem:21} we have $\al(r)<\al(M+1)=M^\f$. Then there exists $N\ge 1$ such that
 \[\al_1(r)\cdots\al_N(r)<\underbrace{M\cdots M}_N.\]
   Therefore, by (\ref{eq:k1}) and Lemma \ref{lem:23} we obtain
\[
\sum_{i=1}^n\frac{\al_i(p_2)}{p_1^i}\ge\sum_{i=1}^\f\frac{\al_i(p_1)}{p_1^i}=1=\sum_{i=1}^\f\frac{\al_i(p_2)}{p_2^i}>\sum_{i=1}^n\frac{\al_i(p_2)}{p_2^i}+\frac{1}{p_2^{n+N}}.
\]
Note that $p_1, p_2$ are elements of $\U$. Then $p_2>p_1\ge q'$. This  implies
\begin{align*}
 \frac{1}{(M+1)^{n+N}}&<\frac{1}{p_2^{n+N}}<\sum_{i=1}^n\left(\frac{\al_i(p_2)}{p_1^i}-\frac{\al_i(p_2)}{p_2^i}\right)\\
 &\le\sum_{i=1}^\f\left(\frac{M}{p_1^i}-\frac{M}{p_2^i}\right)=\frac{M(p_2-p_1)}{(p_1-1)(p_2-1)}\le\frac{M(p_2-p_1)}{(q'-1)^2}.
\end{align*}
Therefore, by (\ref{eq:37}) it follows that
\[
\pi_{M+1}(\al(p_2))-\pi_{M+1}(\al(p_1))\le(M+1)^{1-n}\le\frac{(M+1)^{2+N}}{(q'-1)^2}(p_2-p_1).
\]
Furthermore, by Lemma \ref{lem:21} it follows that $\pi_{M+1}(\al(p_2))-\pi_{M+1}(\al(p_1))\ge 0$. Hence,
by using
\[f=\pi_{M+1}\circ\al:\quad \U\cap(q, r)\ra\pi_{M+1}(\set{\al(p): p\in\U\cap(q, r)})\]
 in Lemma \ref{lem:35}   we establish  the lemma. 
\end{proof}

\begin{lemma}\label{lem:37}
Let $(q_0, q_0^*)$ be a connected component of $(q', M+1)\setminus\overline{\U}$. Then $\dim_H\U\cap(q_0^*, r)>0$ for any $r\in(q_0^*, M+1]$.
\end{lemma}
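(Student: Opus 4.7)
By Lemma~\ref{lem:36}, it suffices to exhibit a subset $S \subset \U \cap (q_0^*, r)$ such that $\pi_{M+1}(\{\alpha(p):p\in S\})$ has positive Hausdorff dimension. Using Lemma~\ref{lem:31}, pick $n$ so that $r_n \in \U \cap (q_0^*, r)$. Write $(\th_i)=\alpha(q_0^*)$ and $\xi_n = \th_1\cdots\th_{2^n p}$. The plan is to construct, inside the subshift $X_A^{(n)}$ of Lemma~\ref{lem:33}, a sub-subshift $Y_n$ of positive topological entropy each of whose elements equals $\alpha(p)$ for some $p \in \U\cap(q_0^*, r_n)$.

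More concretely, I would look at sequences of the form
\[
(c_i) = \xi_n\, \overline{\xi_n^-}\,\overline{\xi_n}\,\xi_n\, W_1 W_2 W_3\cdots,
\]
where each $W_j$ is chosen from a small set of ``super-words'' compatible with the transitions of $X_A^{(n)}$ and returning to state $\xi_n$ (for instance $\overline{\xi_n}\,\xi_n$ of length $2\cdot 2^n p$ and $\overline{\xi_n^-}\,\overline{\xi_n}\,\xi_n$ of length $3 \cdot 2^n p$), so that arbitrary concatenations give a sofic subshift $Y_n$. A direct blockwise comparison shows that the fixed prefix $\xi_n\overline{\xi_n^-}\overline{\xi_n}\xi_n$ and either option for $W_1$ force $(c_i)$ to deviate strictly above $(\th_i)$ by the sixth $2^n p$-block at the latest, whereas the third block $\overline{\xi_n}<\overline{\xi_n^-}$ already places $(c_i)$ strictly below $\alpha(r_n)=\xi_n(\overline{\xi_n^-})^\infty$; hence $(\th_i) < (c_i) < \alpha(r_n)$ for every $(c_i)\in Y_n$. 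The heart of the argument is to verify, using Lemma~\ref{lem:32} in a case analysis parallel to Lemma~\ref{lem:31}, the strict self-referential inequalities
\[
\overline{(c_i)} < \sigma^m(c_i) < (c_i),\quad m \geq 1,
\]
of Lemma~\ref{lem:24} \emph{uniformly} over $Y_n$. Granting this, each $(c_i) \in Y_n$ equals $\alpha(p)$ for a unique $p \in \U$, automatically in $(q_0^*, r_n)$. Some care is needed in the choice of super-word alphabet (and possibly an extra combinatorial restriction, e.g.\ prohibiting the periodic sequence over a single super-word) to exclude periodic elements of the shift that would attain equality in a shifted comparison.

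Since $Y_n$ is a sofic subshift of positive topological entropy, $\pi_{M+1}(Y_n)$ is a graph-directed self-similar set satisfying the open set condition by the Mauldin--Williams theorem (cf.~\cite{Mauldin_Williams_1988}), and therefore has positive Hausdorff dimension (of order $1/(2^n p \log(M+1))$). Combining with Lemma~\ref{lem:36} yields
\[
\dim_H \U \cap (q_0^*, r) \;\geq\; \dim_H \U \cap (q_0^*, r_n) \;\geq\; \dim_H \pi_{M+1}(Y_n) \;>\; 0.
\]
The principal obstacle is the uniform verification of Lemma~\ref{lem:24} over the whole uncountable family: the shifts $\sigma^m$ must be controlled at \emph{every} position (not just at super-word or $2^n p$-block boundaries), and the comparisons mix the free branching choices in $Y_n$ with the rigid generalized Thue--Morse structure of $\alpha(q_0^*)$; the combinatorial bounds of Lemma~\ref{lem:32} are tailored precisely to this situation.
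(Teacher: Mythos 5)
Your overall architecture coincides with the paper's: reduce to Lemma~\ref{lem:36}, build an uncountable positive-entropy family of sequences squeezed strictly between $\alpha(q_0^*)$ and $\alpha(r_n)$ inside the subshift $X_A^{(n)}$ of Lemma~\ref{lem:33}, verify the inequalities of Lemma~\ref{lem:24} for every member, and compute the dimension of the projection via the Mauldin--Williams theorem. The gap is that the specific family $Y_n$ you propose does \emph{not} satisfy Lemma~\ref{lem:24}, and the failure is not the borderline ``equality for periodic points'' issue you flag but a strict violation on a large part of $Y_n$. Concretely, take $W_1=\overline{\xi_n}\,\xi_n$ and suppose that somewhere in the tail a block $\xi_n$ (the last block of some $W_j$) is followed by two consecutive super-words $\overline{\xi_n^-}\,\overline{\xi_n}\,\xi_n$. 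Shifting to the start of that $\xi_n$ produces a sequence beginning with $\xi_n\,\overline{\xi_n^-}\,\overline{\xi_n}\,\xi_n\,\overline{\xi_n^-}\cdots$, which agrees with your prefix $\xi_n\overline{\xi_n^-}\,\overline{\xi_n}\,\xi_n=\th_1\cdots\th_{2^{n+2}p}$ for the first four $2^np$-blocks and then exceeds the fifth block $\overline{\xi_n}$ of $(c_i)$, since $\overline{\xi_n^-}>\overline{\xi_n}$. Hence $\si^m((c_i))>(c_i)$ for that $m$, so $(c_i)$ is not of the form $\al(q)$ for any $q$, let alone $q\in\U$. The root cause is that your fixed prefix coincides with the corresponding prefix of $\al(q_0^*)$ and only rises above it four blocks later, exactly where the free branching of $Y_n$ reintroduces ambiguity; forcing $W_1=\overline{\xi_n^-}\,\overline{\xi_n}\,\xi_n$ merely pushes the same problem one super-word further along.

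The paper's remedy is to choose a prefix that strictly exceeds the corresponding prefix of $(\th_i)=\al(q_0^*)$ at an \emph{early} position: it uses $\xi_{n-1}(\overline{\xi_{n-1}}\,^+)^3$, which lives on the finer $2^{n-1}p$-block scale (so it is not itself a word of $X_A^{(n)}$) and first exceeds $\th_1\cdots\th_{2^{n+1}p}$ already at position $2^np+2^{n-1}p$. Since Lemma~\ref{lem:32} bounds every shifted tail from above by the corresponding prefix of $(\th_i)$ on windows of length $2^{n+1}p$, this prefix strictly dominates every shift landing in the tail, and the tail may then be an \emph{arbitrary} follower of $\overline{\xi_n}$ in $X_A^{(n)}$ with no further pruning, which is what makes the entropy and dimension computation go through. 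You would need to replace your prefix by one with this domination property (or prune $Y_n$ far more aggressively, at the cost of possibly killing the entropy) before the final step of your argument can be carried out.
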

\begin{proof}
Suppose that $(q_0, q_0^*)$ is a connected component generated by   $t_1\cdots t_p$.  Let $(\th_i)=\al(q_0^*)$.
For $n\ge 2$ we write $\xi_n=\th_1\cdots\th_{2^n p}$, and denote by
\[
\Gamma_n':=  \set{(d_i):  d_1\cdots d_{2^{n+1}p}=\xi_{n-1}( \overline{\xi_{n-1}}\,^+)^3, \quad (d_{2^{n+1}p+i}) \in X_A^{(n)}(\overline{\xi_n}) }.
\]
Here $X_A^{(n)}(\overline{\xi_n})$ is the follower set of $\overline{\xi_n}$ in the subshift of finite type $X_A^{(n)}$ defined in (\ref{eq:35}).
Now we claim that   any sequence $(d_i)\in\Gamma_n'$  satisfies
\begin{equation}
  \label{eq:39}
  \overline{(d_i)}<\si^j((d_i))<(d_i)\quad\textrm{for all} \quad j\ge 1.
\end{equation}

Take $(d_i)\in\Gamma_n'$. Then we deduce by the definition of $\Gamma_n'$ that
\begin{equation}
\label{eq:310}
d_1\cdots d_{2^{n+1}p+2^{n-1} p}=\th_1\cdots\th_{2^{n-1}p}(\overline{\th_1\cdots\th_{2^{n-1}p}}\,^+)^3\, \overline{\th_1\cdots\th_{2^{n}p}}.
\end{equation}
We will split the proof of (\ref{eq:39}) into the following five cases.
\begin{enumerate}
  \item[(a)] $1\le j<2^{n-1}p$.  By   (\ref{eq:310})  and Lemma \ref{lem:25} it follows that
  \[
  \overline{\th_1\cdots\th_{2^{n-1}p-j}}<d_{j+1}\cdots d_{2^{n-1}p}=\th_{j+1}\cdots\th_{2^{n-1}p}\le\th_1\cdots\th_{2^{n-1}p-j},
  \]
  and
  \[
 d_{2^{n-1}p+1}\cdots d_{2^{n-1}p+j}=\overline{\th_1\cdots\th_j}<\th_{2^{n-1}p-j+1}\cdots\th_{2^{n-1}p}.
  \]
  This implies that   (\ref{eq:39}) holds for all $1\le j<2^{n-1}p$.

  \item[(b)] $2^{n-1}p\le j<2^n p$. Let $k=j-2^{n-1}p$. Then $0\le k<2^{n-1}p$. Clearly, if $k=0$, then by using $\th_1>\overline{\th_1}$ and $n\ge 2$ it yields that
      \[
      \overline{\th_1\cdots\th_{2^{n-1}p}}<d_{j+1}\cdots d_{2^n p}=\overline{\th_1\cdots\th_{2^{n-1}p}}\,^+<\th_1\cdots\th_{2^{n-1}p}.
      \]
Now we assume $1\le k<2^{n-1}p$. Then by   (\ref{eq:310}) and Lemma \ref{lem:25} it follows that
  \[
  \overline{\th_1\cdots\th_{2^{n-1}p-k}}<d_{j+1}\cdots d_{2^n p}=\overline{\th_{k+1}\cdots\th_{2^{n-1}p}}\,^+\le\th_1\cdots\th_{2^{n-1}p-k},
  \]
  and
  \[
  d_{2^{n}p+1}\cdots d_{2^n p+k}=\overline{\th_1\cdots\th_k}<\th_{2^{n-1}p-k+1}\cdots\th_{2^{n-1}p}.
  \]
  Therefore, (\ref{eq:39}) holds for all $2^{n-1}p\le j<2^n p$.

  \item[(c)] $2^n p\le j<2^n p+2^{n-1}p$. Let $k=j-2^n p$. Then in a similar way as in Case (b) one can prove (\ref{eq:39}).

  \item [(d)] $2^np+2^{n-1}p\le j<2^{n+1}p$. Let $k=j-2^np-2^{n-1}p$. Again by  the same arguments as in Case (b) we obtain (\ref{eq:39}).

  \item [(e)] $j\ge 2^{n+1}p$. Note that
  \begin{equation*}
  d_1\cdots d_{2^{n+1}p}=\th_1\cdots\th_{2^{n-1}p}(\overline{\th_1\cdots\th_{2^{n-1}p}}\,^+)^3>\th_1\cdots\th_{2^{n+1}p}.
   \end{equation*}
   Then (\ref{eq:39}) follows by Lemma \ref{lem:32}.
\end{enumerate}

Therefore, by (\ref{eq:39}) and Lemma \ref{lem:24} it follows that any sequence in $\Gamma_n'$ corresponds to a unique base $q\in\U$. Furthermore,  by (\ref{eq:310}) and Lemma \ref{lem:31} each sequence $(d_i)\in\Gamma_n'$ satisfies
\[\al(q_0^*)=(\th_i)<(d_i)<\th_1\cdots\th_{2^{n-1}p}(\overline{\th_1\cdots\th_{2^{n-1}p}}\,^+)^\f=\al(r_{n-1}).\]
 Then by   Lemma  \ref{lem:21} it follows that \[\al(q)\in\Gamma_n'\quad\Longrightarrow\quad q\in\U\cap(q_0^*, r_{n-1}).\]
Fix $r>q_0^*$. So by Lemma \ref{lem:31} there exists a sufficiently large integer $n\ge 2$ such that
\begin{equation}\label{eq:k4}
\Gamma_n'\subset\set{\al(q): q\in\U\cap(q_0^*, r)}.
\end{equation}

Note by the proof of Lemma \ref{lem:33}   that $X_A^{(n)}$ is an irreducible subshift of finite type over the states $\set{\xi_n, \xi_n^-, \overline{\xi_n}, \overline{\xi_n^-}}$.
Hence, by (\ref{eq:k4}) and Lemma \ref{lem:36} it follows that
\begin{align*}
\dim_H\U\cap(q_0^*, r)\ge\dim_H\pi_{M+1}(\Gamma_n')&=\dim_H\pi_{M+1}(X_A^{(n)})\\
&=\frac{\log \left((1+\sqrt{5})/2\right)}{2^n p\log (M+1)}
\;>0.
\end{align*}
\end{proof}

\begin{proof}[Proof of Theorem \ref{th:11} for (i) $\Leftrightarrow$ (iii)]
First we prove (i) $\Rightarrow$ (iii).  Excluding the trivial case $q=M+1$ we take $q\in\U\setminus\set{M+1}$.  Suppose that $r \in (q,M+1]$. If $q=q_0^*$, then by Lemma \ref{lem:37} we have
$
\dim_H\U\cap(q, r)>0.
$

If $q\in (\U\setminus\set{M+1}) \setminus\bigcup\set{q_0^*}$, then by Lemma \ref{lem:26} (i) there exists $q_0^*\in(q, r)$. So, by Lemma \ref{lem:37} we have
\[
\dim_H\U\cap(q,r)\ge\dim_H\U\cap(q_0^*, r)>0.
\]

Now we prove (iii) $\Rightarrow$ (i). Suppose on the contrary that  $q\in(1,M+1]\setminus\U$. We will show that $\U\cap(q,r)=\emptyset$ for some $r\in(q, M+1]$.
Take $q\in(1, M+1]\setminus\U$. By (\ref{eq:11}) it follows that 
\[
q\in(1, q')\cap\bigcup[q_0, q_0^*).
\]
This implies  that  $\U\cap(q,r)=\emptyset$ for $r \in (q, M+1]$ sufficiently close to $q$.  
\end{proof}

\subsection{ Proof  of Theorem \ref{th:12}.}

\begin{proof}[Proof of Theorem \ref{th:12}]
(i) $\Rightarrow$ (ii). Take $q\in\overline{\U}\setminus(\bigcup\set{q_0^*}\cup\set{q'})$ and $p\in(1,q)$. By Lemma \ref{lem:26} (ii) there exists $q_0^*\in(p,q)$. Hence, by Lemma \ref{lem:33} it follows that
\[
\dim_H\pi_{M+1}(\U_q'\setminus\U_p')\ge\dim_H\pi_{M+1}(\U_q'\setminus\U_{q_0^*}')>0.
\]

(ii) $\Rightarrow$ (i).  Suppose on the contrary that $q\notin\overline{\U}\setminus(\bigcup\set{q_0^*}\cup\set{q'})$. Then by (\ref{eq:11}) we have
\[
q\in(1,q']\cup\bigcup(q_0, q_0^*].
\]
By using Lemma \ref{lem:34} it follows that for $p\in(1,q)$ sufficiently close to $q$ we have $\dim_H\pi_{M+1}(\U_q'\setminus\U_p')=0$.

(i) $\Rightarrow$ (iii).  Take $q\in\overline{\U}\setminus(\bigcup\set{q_0^*}\cup\set{q'})$ and $p\in(1,q)$. By Lemma \ref{lem:26} (ii) there exists $q_0^*\in(p,q)$. Hence, by Lemma \ref{lem:37} it follows that
\[
\dim_H\U\cap(p,q)\ge\dim_H\U\cap(q_0^*, q)>0.
\]

(iii) $\Rightarrow$ (i).  Suppose $q\notin\overline{\U}\setminus(\bigcup\set{q_0^*}\cup\set{q'})$. Then by (\ref{eq:11}) we have
$
q\in(1,q']\cup\bigcup(q_0, q_0^*].
$
So, for $p\in(1, q)$ sufficiently close to $q$ we have  $\U\cap(p,q)=\emptyset$. 
\end{proof}

\section*{Acknowledgements}
The authors thank the anonymous referees for many useful comments.  The first author was supported by   NSFC No.~11401516 and Jiangsu Province Natural Science Foundation for the Youth No. BK20130433. The second author was
supported by  NSFC No.~11271137, 11571144, 11671147 and Science and Technology Commission of Shanghai Municipality (STCSM)  No.~13dz2260400. The third author was supported by NSFC No.~11601358.

\end{document}